\definecolor{OliveGreen}{rgb}{0.3, 0.4, .1}
\def\a{\alpha}
\def\b{\beta}
\def\f{\varphi}
\def\g{\gamma}
\def\s{\sigma}
\def\t{\tau}
\def\Ccal{\mathcal{C}}
\def\Ical{\mathcal{I}}
\def\Pcal{\mathcal{P}}
\def\Rbb{\ensuremath{\mathbb{R}}}
\def\mbfB{\ensuremath{\mathbf{B}}}
\def\mbfc{\ensuremath{\mathbf{c}}}
\def\mbfd{\ensuremath{\mathbf{d}}}
\def\mbfv{\ensuremath{\mathbf{v}}}
\def\mbfw{\ensuremath{\mathbf{w}}}
\def\mbfz{\ensuremath{\mathbf{z}}}
\def\mbf0{\ensuremath{\bm{0}}}
\newtheorem{thm}{Theorem}
\newtheorem{lem}{Lemma}
\theoremstyle{definition}
\theoremstyle{remark}
\newtheorem{rem}{Remark}
\def\<{\langle}
\def\>{\rangle}
\def \jump[#1]{\llbracket #1 \rrbracket}
\def \avg[#1]{\{ \! \{ #1 \}\!\}}
\DeclareMathOperator\erf{erf}
\DeclareMathOperator\erfc{erfc}
\def\by{\times}
\def\->{\rightarrow}
\begin{document}
\title{{A Posteriori} Error Estimation for Multi-Stage Runge-Kutta IMEX Schemes}
\author[NMU]{Jehanzeb H.~Chaudhry}
\ead{jehanzeb@unm.edu}
\author[WTAMU]{J.B.~Collins}
\ead{jcollins@wtamu.edu}
\author[NMU,Sandia]{John N. Shadid}
\ead{jnshadi@sandia.gov}

\address[NMU]{Department of Mathematics and Statistics, The University of New Mexico, Albuquerque, NM 87131}
\address[WTAMU]{Department of Mathematics, West Texas A\&M University, Canyon, TX 79016}
\address[Sandia]{Computational Mathematics Department, Sandia National Laboratories, Albuquerque, NM 87123}

\begin{abstract}
	Implicit-Explicit (IMEX) schemes are widely used for time integration methods for approximating solutions to a large class of problems.  In this work, we develop accurate \emph{a posteriori} error estimates of a quantity-of-interest for approximations obtained from multi-stage IMEX schemes.  This is done by first defining a finite element method that is nodally equivalent to an IMEX scheme, then using typical methods for adjoint-based error estimation.  The use of a nodally equivalent finite element method allows a decomposition of the error into multiple components, each describing the effect of a different portion of the method on the total error in a quantity-of-interest.  	
\end{abstract}

\begin{keyword}
\emph{a posteriori} error estimation, adjoint operator, implicit-explicit schemes, IMEX schemes, Runge-Kutta schemes, multi-stage methods
\end{keyword}

\maketitle

\section{Introduction}

In this paper 
we consider \emph{a posteriori} error analysis for multi-stage implicit-explicit (IMEX) schemes applied to autonomous  nonlinear ODEs,
\begin{equation}
\begin{cases}
\dot{y}(t) = f(y(t)) + g(y(t)), \quad t \in (0,T],\\
y(0) = y_0.
\end{cases}
\label{eq:model_ode}
\end{equation}
Here $y(t) \in \mathbb{R}^m$, $\dot{y}  = d{y}/ d{t}$ and
$f,g:\mathbb{R}^m \rightarrow \mathbb{R}^m$.  The right hand side of \eqref{eq:model_ode} is split such that  $g$  represents a much smaller time scale than  $f$, and is often referred to as the ``stiff'' term.  Systems of the form
\eqref{eq:model_ode} often arise from spatial discretization of
partial differential equations,  for example, convection-diffusion-reaction equations, and hyperbolic systems with relaxation~\cite{PR2000}, where $f$  represents the convection term and $g$  represents the diffusion or relaxation term.

As opposed to \emph{a priori} error bounds, \emph{a posteriori} error estimates provide an accurate  computation of the discretization error in a particular approximation.  Accurate error estimation is a critical component of numerical simulations, being useful for reliability, uncertainty
quantification and adaptive error control.  In particular, we consider goal-oriented \emph{a posteriori} error estimation.  Often, the aim of a numerical simulation is to compute the value of a linear functional of the solution, a so called quantity-of-interest (QoI) defined as,
\begin{equation}
\label{eq:qoi}
Q(y) \equiv (y(T), \psi ),
\end{equation}
where $\psi \in \mathbb{R}^m$ and $(\cdot, \cdot)$ denotes the standard Euclidean inner product.  In this paper we employ adjoint based \emph{a posteriori} analysis to
quantify error for a given QoI.  Adjoint based error estimation is
widely used for a host of numerical methods including finite elements,
time integration, multi-scale simulations and inverse problems~\cite{estep_sinum_95,eehj_book_96,Estep:Larson:00,
  AO2000,BangerthRannacher,barth04,beckerrannacher,
  Giles:Suli:02, YCLP2004, CEG+2015, jdt13, VRAS2015}. The error estimate weights
computable residuals of the numerical solution with the
solution of an adjoint problem to quantify the accumulation and propagation of
error.  Moreover, the estimate also identifies different contributions
to the total error.

Developing accurate and stable time  integration
of systems of the form \eqref{eq:model_ode}  is challenging as the
term $g$ represents time scales which are often order of magnitudes smaller
than the time scales for the component $f$~\cite{IMEXRKHyperbolicStiffRelax2005}.  IMEX schemes offer an attractive option for such systems. 
These schemes treat the $f$ component explicitly while treating the $g$ component implicitly.  Hence, such schemes attempt to minimize the computational cost by balancing the number of nonlinear solves needed for an implicit scheme with the small time step required to maintain stability with an explicit scheme.

IMEX schemes are widely used for time integration methods for approximating solutions to a large class of problems~\cite{IMEXRKHyperbolicStiffRelax2005,HuRu07,StabIMEXHyperbolicStiffRX2011,
IMEXRadHydroNonLinElim2010,KadiogluKnollHydroNonlinearHeat2010,SvardMishraIMEXFlowStiffSource2011,IMEXBlackHoles2011,GenRelativisticRadHydroIMEX2012,ARSW95,ARS97,CarpenterKennedy2005,CS2010,CJS+2014,ZSZ2015,ZSB2014}.  IMEX schemes may be divided into two classes:
multi-step IMEX schemes and multi-stage Runge-Kutta IMEX
schemes (also termed here IMEX RK). Multi-step IMEX schemes  are a generalization of
multi-step schemes like Adams-Bashforth or  backward differentiation
formulas (BDF) and utilize solution values from previous
time nodes to form the solution at the current time node. 
IMEX multi-step methods  can be designed with
various stability properties such as A-stability, L-stability
and strong-stability preservation (SSP) that are 
desirable for a wide range of challenging systems \cite{HuRu07}.
Due to the inherent data dependencies  of higher-order  multi-step methods these techniques 
often require start-up procedures  that employ lower-order approximations with smaller scaled time-step sizes to 
ramp up to the full formal approximation order. Additionally, to restart a 
calculation, additional old time step values for the solution are required. 
In contrast multi-stage Runge-Kutta in general, and multi-stage  IMEX RK schemes 
specifically do not require ramping procedures and are self-restarting methods. This is a desirable feature for 
long-time-scale integrations with high-order accuracy. Additionally, multi-stage  IMEX RK methods can also be designed with
various stability properties such as A-stability, L-stability and strong-stability preservation (SSP) \cite{HuRu07}. For an extensive  comparison of 
multistep and multi-stage IMEX methods see \cite{HuRu07} and the references contained therein. In general both IMEX multi-step and multi-stage methods have
significant advantages and potential limitations when considering issues such as stability, accuracy, memory usage, restarting, etc. for specific classes of challenging multiphysics systems. 
However the ability to allow both explicit and implicit evaluation of operators in a multiple-time-scale multiphsyics system integration, in a well structured general mathematical method, is very appealing as described above. 
This paper focuses on what we believe is the first development of quantity-of-interest focused \emph{a posteriori} error estimation   for IMEX RK type methods and complements our earlier work on IMEX multistep methods \cite{CEG+2015}.

Adjoint-based \emph{a posteriori} error estimation is widely applied to finite element approximations, as a variational formulation is needed to compute the error, however, there is some recent work which considers finite difference methods.  Explicit multi-stage and multi-step time integrators are considered in \cite{jdt13} and multi-step IMEX schemes were considered in \cite{CEG+2015}.  Error estimates for the Lax-Wendroff scheme were developed in \cite{Collins:2014} and certain Godunov methods were analyzed in \cite{Barth:2007, barth:2002, larson:2000}.

  In this paper, we derive \emph{a posteriori} error estimates for the multi-stage Runge-Kutta IMEX  schemes.  To derive an estimate, we must first represent the scheme as a finite element method.  This is done by developing a finite element method that is ``nodally equivalent" to a certain IMEX scheme.  A nodally equivalent finite element approximation agrees with the IMEX approximation at the discretization nodes, while still being defined on the entire temporal domain.  The error representation formula is then derived using well defined methods.  In addition, we decompose the error estimate into components representing different contributions to the error.  In this way we are able to discuss the contribution of the implicit and explicit portions of the method to the total error in the quantity-of-interest.

 The paper is organized as follows. In \S \ref{sec:prelim} we discuss
the Runge-Kutta IMEX  schemes and formulate an equivalent finite
element method. This equivalence allows us to carry out \emph{a posteriori}
analysis in \S \ref{sec:apost_ana}. Numerical examples arising from the
discretization of partial differential equations are presented in \S
\ref{sec:num_examples}. The examples are associated with linear and nonlinear
convection diffusion type systems, a nonlinear Burger's equation with dissipation, 
and a coupled system from magnetohydrodynamics (MHD) 
representing propagation of an Alfven wave in a viscous conducting fluid.

\section{Preliminaries and Equivalent Finite Element Method}	
\label{sec:prelim}
	
	In this section, we introduce some notation, then give a brief review of generic $\nu$-stage IMEX Runge-Kutta schemes.  For a more complete discussion see \cite{PR2000,IMEXRKHyperbolicStiffRelax2005, ARS97}.  Generic continuous Galerkin finite element methods are introduced for solving \eqref{eq:model_ode}, and then the idea of nodal equivalency is explained and the equivalent finite element method is derived.  Convergence properties of finite element methods for time integration are discussed in \cite{eehj_book_96}. The section is concluded by showing second order convergence of the nodally equivalent finite element solution.
	
	\subsection{Notation and Generic IMEX Runge-Kutta Schemes} \label{ssec:general:IMEX}
		  We begin with some notation.  All approximations discussed are based on a discretization defined by the nodes
		\[0 = t_0 < t_1 < \ldots < t_n < \ldots < t_N = T,\]
		with time step $k_n = t_{n+1} - t_n$.  The IMEX approximation at each node is denoted by $Y_n \approx y(t_n)$.  Finally, for brevity we introduce notation for the subintervals $I_n := [ t_{n}, t_{n+1}]$ and for sub-discretization nodes $t_{n+\t} := t_n + k_n\t$ with $\t \leq 1$.
		
		Multi-stage IMEX schemes are defined by two Butcher tableaus, one for the explicit method and another for the implicit method, 
	\begin{align}
		\begin{tabular}{l|l}
		$\mbfc$ & $A$           \\ \hline
		      & $\mbfw$    
		\end{tabular}
	\hspace{.3in}
		\begin{tabular}{l|l}
		$\mbfd$ & $B$          \\ \hline
		      & $\tilde{\mbfw}$    
		\end{tabular}, \label{eq:general:Butcher}
	\end{align}
	where $A  \in \Rbb^{\nu \by \nu}$ and $\mbfc, \mbfw \in \Rbb^\nu$ define the explicit method and $B \in \Rbb^{\nu \by \nu}$ and $\mbfd, \tilde{\mbfw} \in \Rbb^\nu$ define the implicit method. The components of $A,\mbfc, \mbfw , B,\mbfd$ and $\tilde{\mbfw} $ are denoted as  $a_{ij}, c_i, w_i, b_{i,j}, d_i $ and $\tilde{w}_i$ respectively for $1 \leq i,j \leq \nu$. The update formula of the IMEX scheme is given by,
	\begin{align}
		\tilde{Y}_i &= Y_n  + k_n \sum_{j=1}^{i-1} a_{ij} f(\tilde{Y}_j) + k_n \sum_{j=1}^\nu b_{ij} g(\tilde{Y}_j), \label{eq:stage:vars} \\
		Y_{n+1} &= Y_n + k_n \sum_{i=1}^\nu \left( w_i f(\tilde{Y}_i) + \tilde{w}_i g(\tilde{Y}_i) \right). \label{eq:imex_rk}
	\end{align}
	
	Since the $A$ matrix defines an explicit scheme, it must be strictly lower triangular.  The implicit schemes we consider are DIRK schemes, therefore the matrix $B$ is lower triangular.  This condition ensures that $f(y)$ is always evaluated explicitly.  Butcher tableaus for some IMEX RK schemes are given in \S \ref{sec:num_examples}.


In the analytical development that follows we assume that the ODE systems of interest represent discretizations of PDE systems that have sufficient physical dissipation to develop stable discretizations for sufficiently fine meshes.  In this context the numerical results that are presented to confirm the analysis are associated with linear and nonlinear convection diffusion type systems, a nonlinear Burgers' equation with dissipation, and a coupled system from magnetohydrodynamics (MHD). 
The linearized form of these problems can be considered to be special cases of general convection-diffusion-reaction type systems.
References  \cite{HuRu07,PR2000} provide stability results for these types of systems integrated at the time-step of interest with IMEX type methods.
Adjoint analysis of systems which are convection dominated or have no physical dissipation and can support discontinuities is an active area of research and is beyond the scope of this study (see e.g. \cite{Giles:2010:CLA:1958393.1958407,Giles:2010:CLA:1958393.1958408}).

	\subsection{Finite Element Method}
		Adjoint-based error estimation requires that an approximate solution must be defined for the entire domain $[0,T]$.  This is done by using a finite element method to obtain the approximation.  
		 The same grid defined above is used to define the space of continuous piecewise polynomials,
		\begin{align}
			\Ccal^q = \{ w \in C^0([0,T];\Rbb^m) : w|_{I_n} \in \Pcal^q(I_n), 1 \leq n \leq N \},	
		\end{align}
		where $\Pcal^q(I_n)$ is the space of all polynomials of degree $q$ or less on $I_n$.  The continuous Galerkin finite element method of order $q+1$ for \eqref{eq:model_ode}, denoted cG(q), is defined interval-wise by,
		
		Find $Y \in \Ccal^q$ such that $Y(0) = y_0$ and for $n=0, \ldots, N-1$,
		\begin{align}
				\< \dot{Y},v_n\>_{I_n} =  \<f(Y) + g(Y), v_n \>_{I_n} , \quad \forall \; v_n \in \Pcal^{q-1}(I_n),
		\end{align}
		where $\< \cdot, \cdot \>_{[a,b]}  = \int_a^b ( \cdot, \cdot ) \, dt$ denotes the $L^2([a,b])$ inner product. Note that $Y \in \Pcal^q(I_n)$ and hence its time derivative, $\dot{Y}$ is well defined and easily obtained on each interval $I_n$.

	\subsection{Equivalent Finite Element Method} \label{ssec:equiv}
		In this section we construct a finite element method that is nodally equivalent to a particular IMEX scheme.  Nodal equivalence was developed in \cite{jdt13} to compute error estimates for explicit Runge-Kutta and Adams-Bashforth schemes and in \cite{CEG+2015} for multi-step IMEX schemes.  Two approximations are nodally equivalent if they are equal at the nodes $t_n$ of the discretization.  Two methods are nodally equivalent if they produce nodally equivalent approximations.  Therefore the finite element approximation constructed in this section is a function $Y(t) \in \Ccal^q$ with the property,
		\[ Y(t_n) = Y_n,\]
		where $Y_n$ is defined by an  IMEX  RK scheme \eqref{eq:imex_rk}. 
		
		
		We develop a nodally equivalent finite element method for the generic IMEX scheme defined by the Butcher tableaus \eqref{eq:general:Butcher}.  To obtain equivalency, we impose further conditions on these schemes by requiring all the elements of $\mbfd$ be distinct. 
		 The reason for this restriction is discussed in Remark~\ref{rem:1}.  
		
		We begin by defining an approximation operator $\Ical : H^1([0,T];\Rbb^m)\-> L^2([0,T];\Rbb^m)$.  Denoting the restriction by $\Ical^nY = \Ical Y|_{I_n}$, the operator is defined by,
	\begin{align}
		\Ical^n Y(t) = \sum_{i=1}^\nu \tilde{Y}_i \prod_{j=1,j \neq i}^\nu \frac{(t-t_{n+d_j})}{(t_{n+d_i}-t_{n+d_j})}. \label{eq:Ical}
	\end{align}
		where $\tilde{Y}_i$ are the stage variables for the IMEX scheme.  This operator approximates $Y$ by interpolating through the stage variables from the IMEX scheme.  
		Using this, the equivalent finite element method is defined by:
		
		Find $Y \in \Ccal^q$ such that $Y(0) = y_0$ and for $n=0,\ldots,N-1$,
		\begin{align}
			\< \dot{Y} , v_n\>_{I_n} = \< f(\Ical Y),v_n\>_{I_n,Q^f} + \< g(\Ical Y),v_n\>_{I_n,Q^g} \quad  \forall \; v_n \in \Pcal^{q-1}(I_n), \label{eq:equiv:FEM}
		\end{align}
		where the particular quadratures are defined by
		\begin{align}
			\<\f \>_{I_n,Q^f} &= k_n\sum_{i=1}^\nu w_i \f(t_{n+d_i}), \\
			\< \f \>_{I_n,Q^g} &= k_n\sum_{i=1}^\nu \tilde{w}_i \f(t_{n+d_i}).
		\end{align}
		\begin{rem}
		\label{rem:1}
			We now  see the reason for the condition that the elements of $\mbfd$ be distinct.  If they were not distinct, then $\Ical^nY$ would be ill-defined at $t_{n+d_i}$.  It is possible to develop a nodally equivalent method without this restriction, but the definition would be considerably more complicated.  We note that since the operator $\Ical$ does not depend on the vector $\mbfc$ there is no similar restriction on the vector $\mbfc$.  This is particular to autonomous systems, and would not be true if $f$ or $g$ had explicit dependence on $t$.
		\end{rem}

		Now  we show that the FEM approximation in \eqref{eq:equiv:FEM} is nodally equivalent to its corresponding IMEX scheme approximation.
		\begin{thm}
		\label{thm:equiv_fem_imex}
			The approximation $Y(t)$ obtained from the finite element method \eqref{eq:equiv:FEM} is nodally equivalent to the approximation $\{Y_n\}$ obtained from the IMEX scheme defined by the Butcher tableaus \eqref{eq:general:Butcher}.
		\end{thm}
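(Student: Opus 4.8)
The plan is to induct on the node index $n$, using the observation that the nodal values of the finite element solution are detected by constant test functions alone. The base case is immediate: both \eqref{eq:equiv:FEM} and the IMEX scheme \eqref{eq:imex_rk} are initialized by $Y(t_0) = y_0 = Y_0$.

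For the inductive step, suppose $Y(t_n) = Y_n$. First I would identify the stage values entering $\Ical$ on $I_n$. By construction these are obtained from the stage equations \eqref{eq:stage:vars} with starting datum $Y(t_n)$, so they depend on $Y$ only through the nodal value $Y(t_n)$; since $Y(t_n) = Y_n$ and \eqref{eq:stage:vars} is, verbatim, the system that defines the IMEX stage variables for $Y_n$ (well defined because $A$ is strictly lower triangular and $B$ is lower triangular), they coincide with $\tilde{Y}_1, \dots, \tilde{Y}_\nu$. Then, because the hypothesis of the theorem forces $t_{n+d_1}, \dots, t_{n+d_\nu}$ to be distinct, $\Ical^n Y$ is the Lagrange interpolant through the data $\{(t_{n+d_i}, \tilde{Y}_i)\}_{i=1}^\nu$, so $\Ical^n Y(t_{n+d_i}) = \tilde{Y}_i$ for every $i$.

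The heart of the argument is to test \eqref{eq:equiv:FEM} on $I_n$ against the constant test function $v_n$ equal to an arbitrary $z \in \Rbb^m$, which is admissible since $\Pcal^{q-1}(I_n)$ contains the constants. The left-hand side collapses to $(Y(t_{n+1}) - Y(t_n),\, z)$ by the fundamental theorem of calculus. On the right-hand side, each quadrature rule samples its integrand only at the nodes $t_{n+d_i}$, where $\Ical^n Y = \tilde{Y}_i$, so the two terms combine to $\big(k_n \sum_{i=1}^\nu (w_i f(\tilde{Y}_i) + \tilde{w}_i g(\tilde{Y}_i)),\, z\big)$. Equating the two sides and using that $z$ is arbitrary yields $Y(t_{n+1}) = Y_n + k_n \sum_{i=1}^\nu (w_i f(\tilde{Y}_i) + \tilde{w}_i g(\tilde{Y}_i))$, which is exactly \eqref{eq:imex_rk}; hence $Y(t_{n+1}) = Y_{n+1}$, closing the induction.

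I expect the main difficulty to be expository rather than technical: one must be careful about the precise sense in which the stage values inside $\Ical$ are the stage variables of the IMEX scheme, making explicit that they are determined by the nodal datum $Y(t_n)$ alone so that \eqref{eq:equiv:FEM} is a genuine one-step method and the identification with \eqref{eq:stage:vars} is exact. Once this is settled the computation above is routine; note in particular that the higher-degree test functions in $\Pcal^{q-1}(I_n)$ only fix the shape of $Y$ within each subinterval and play no role in the nodal values, so they can be ignored for this theorem.
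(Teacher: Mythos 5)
Your proposal is correct and follows essentially the same route as the paper: test \eqref{eq:equiv:FEM} with a constant test function, use the fundamental theorem of calculus on the left, and apply the quadrature rules together with $\Ical^n Y(t_{n+d_i}) = \tilde{Y}_i$ to recover the IMEX update \eqref{eq:imex_rk}. The only difference is that you make the induction on $n$ and the dependence of the stage variables on the nodal datum explicit, which the paper leaves implicit.
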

		
		\begin{proof}
				We begin by setting $v_n=1$ in \eqref{eq:equiv:FEM}, evaluate the left side $\< \dot{Y} , 1 \>$ and rearrange to get,
				\begin{align}
					Y_{n+1} = Y_n + \< f(\Ical Y),1\>_{I_n,Q^f} + \< g(\Ical Y),1\>_{I_n,Q^g}.
				\end{align}
				Now applying the quadrature rules and \eqref{eq:Ical} which gives that $\Ical^nY(t_{n+d_i}) = \tilde{Y}_i$,
				\begin{align}
						Y_{n+1} &= Y_n + k_n \sum_{i=1}^\nu w_i f(\Ical^n Y(t_{n+d_i})) + k_n \sum_{i=1}^\nu \tilde{w}_i g(\Ical^n Y(t_{n+d_i})) \\
						&=Y_n + k_n \sum_{i=1}^\nu w_i f(\tilde{Y}_i) + k_n \sum_{i=1}^\nu \tilde{w}_i g(\tilde{Y}_i). \notag
				\end{align}
				which is the update formula for the IMEX scheme.  
		\end{proof}

	\subsection{Convergence of Equivalent Finite Element Method}
		We now prove that the above finite element method converges to the exact solution as the mesh is refined.  We consider second order or higher methods, as IMEX schemes of interest are generally at least second order.  The above equivalency shows that $Y(t)$ interpolates the exact solution $y(t)$ at the nodes $t_n$ to at least second order for such an IMEX scheme, and therefore for sufficiently small $k$ and smooth $y(t)$,
		\begin{align}
			\|y(t_n) - Y(t_n) \| \leq C k^2	,
		\end{align}
		where $k = \max_n k_n$ and $\| \cdot \|$ is the $\Rbb^m$ norm.
		
		The following lemma  discusses stability of interpolation to perturbations in the interpolated values \cite{quarteroni:2000}.
		\begin{lem}\label{lem:interp:stab}
			Let $p(t)$ and $\tilde{p}(t)$ interpolate the points $(t_0,z_0),\ldots, (t_r,z_r)$ and $(t_0,\tilde{z}_0), \ldots, (t_r,\tilde{z}_r)$ respectively with Lagrange polynomial interpolation.  If
			\begin{align}
				\max_{t\in[t_0,t_r]} \sum_{i=0}^r | \ell_i(t)| = \Lambda < \infty,
			\end{align}
			where $\ell_i(t)$ are the Lagrange basis functions then
			\begin{align}
				\|p - \tilde{p} \|_{\infty,[t_0,t_r]} \leq \Lambda \|\mbfz - \tilde{\mbfz} \|_{\infty},
			\end{align}
			where $\| \cdot \|_{\infty,[t_0, t_r]}$ is the $L^\infty([t_0,t_r])$ norm, $\| \cdot \|_\infty$ is the max norm, $\mbfz = [z_0,\ldots,z_r]^T$ and $\tilde{\mbfz} = [\tilde{z}_0,\ldots,\tilde{z}_r]^T$
		\end{lem}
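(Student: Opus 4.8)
The plan is to exploit the linearity of Lagrange interpolation in the data values. First I would write both interpolants in the Lagrange basis associated with the common abscissae $t_0,\ldots,t_r$:
\begin{align}
	p(t) = \sum_{i=0}^r z_i \,\ell_i(t), \qquad \tilde p(t) = \sum_{i=0}^r \tilde z_i \,\ell_i(t).
\end{align}
This representation is legitimate precisely because the basis functions $\ell_i$ depend only on the nodes $t_0,\ldots,t_r$, which are shared by the two interpolation problems. Subtracting gives the single exact identity $p(t) - \tilde p(t) = \sum_{i=0}^r (z_i - \tilde z_i)\,\ell_i(t)$.

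Next I would pass to a pointwise estimate. Fixing $t \in [t_0,t_r]$ and applying the triangle inequality,
\begin{align}
	|p(t) - \tilde p(t)| \;\le\; \sum_{i=0}^r |z_i - \tilde z_i|\,|\ell_i(t)| \;\le\; \Big( \max_{0 \le i \le r} |z_i - \tilde z_i| \Big) \sum_{i=0}^r |\ell_i(t)| \;=\; \|\mbfz - \tilde{\mbfz}\|_\infty \sum_{i=0}^r |\ell_i(t)|.
\end{align}
Invoking the hypothesis $\sum_{i=0}^r |\ell_i(t)| \le \Lambda$ uniformly in $t$, and then taking the supremum over $t \in [t_0,t_r]$, yields the claimed bound $\|p - \tilde p\|_{\infty,[t_0,t_r]} \le \Lambda \,\|\mbfz - \tilde{\mbfz}\|_\infty$.

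I do not expect any real obstacle: the statement is essentially a one-line consequence of linearity together with the definition of the Lebesgue constant $\Lambda$. The only point that warrants a word of care is the shared-nodes structure — both $p$ and $\tilde p$ interpolate at the same $t_i$, so the identical basis $\{\ell_i\}$ represents both; if the abscissae differed this decomposition would break and one would instead argue via a different comparison (e.g. the Newton form). When the data are vector-valued, $z_i \in \Rbb^m$, one simply reads $|z_i - \tilde z_i|$ as the $\Rbb^m$ norm and the argument above goes through verbatim.
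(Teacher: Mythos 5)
Your proof is correct and complete: writing both interpolants in the shared Lagrange basis, subtracting, and applying the triangle inequality together with the definition of $\Lambda$ is exactly the standard argument. The paper itself gives no proof of this lemma---it simply cites \cite{quarteroni:2000}---and your argument is the textbook one found there, so there is nothing to reconcile.
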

		
		We now show convergence using a cG(1) method.
		\begin{thm} \label{thm:convergence}
			If $Y(t)$ is a cG(1) solution of \eqref{eq:equiv:FEM} corresponding to an IMEX scheme of at least second order, then for sufficiently small $k$,
			\begin{align}
				\|y(t) - Y(t) \|_{\infty} \leq (C_1+C) k^2,
			\end{align}
			where $C$ and $C_1$ are constants independent of $k$.
		\end{thm}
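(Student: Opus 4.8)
The plan is to combine the nodal equivalence of Theorem~\ref{thm:equiv_fem_imex} with the fact that a cG(1) approximation is piecewise affine, so that on each subinterval $I_n$ the finite element solution is literally a linear Lagrange interpolant of its endpoint values. As the first ingredient, I would record the nodal error bound $\|y(t_n) - Y(t_n)\| \le C k^2$ already noted in the discussion preceding Lemma~\ref{lem:interp:stab}: it follows from Theorem~\ref{thm:equiv_fem_imex} together with the (at least) second-order accuracy of the underlying IMEX scheme, and it is where the ``sufficiently small $k$'' hypothesis and the assumed stability of the discretized system enter.

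Next, since $Y \in \Ccal^1$, the restriction $Y|_{I_n}$ is the unique affine function with $Y(t_n) = Y_n$ and $Y(t_{n+1}) = Y_{n+1}$; that is, $Y|_{I_n}$ is the linear Lagrange interpolant of the data $\{(t_n,Y_n),(t_{n+1},Y_{n+1})\}$. Let $\pi y$ denote the continuous piecewise-linear interpolant of the exact solution through the nodes, so that $\pi y|_{I_n}$ is the linear Lagrange interpolant of $\{(t_n,y(t_n)),(t_{n+1},y(t_{n+1}))\}$. For smooth $y$ the standard interpolation error estimate gives $\|y - \pi y\|_{\infty,I_n} \le C_1 k^2$ uniformly in $n$ (with $C_1$ proportional to $\sup\|\ddot y\|$).

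Then I would apply Lemma~\ref{lem:interp:stab} on each $I_n$ with $r = 1$, componentwise in $\Rbb^m$: the interpolants $Y|_{I_n}$ and $\pi y|_{I_n}$ use the common nodes $t_n, t_{n+1}$, and the relevant constant is $\Lambda = \max_{t \in I_n}\big(|\ell_0(t)| + |\ell_1(t)|\big) = 1$, because on $I_n$ the two linear basis functions are nonnegative and sum to one. Hence $\|Y - \pi y\|_{\infty,I_n} \le \max\{\|Y_n - y(t_n)\|,\ \|Y_{n+1} - y(t_{n+1})\|\} \le C k^2$ by the nodal bound. A triangle inequality then yields $\|y - Y\|_{\infty,I_n} \le \|y - \pi y\|_{\infty,I_n} + \|\pi y - Y\|_{\infty,I_n} \le (C_1 + C)k^2$, and taking the maximum over $n = 0,\dots,N-1$ gives the stated bound on $[0,T]$.

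The argument is essentially elementary once these pieces are assembled; the only genuinely delicate point is the identification of $Y|_{I_n}$ with an endpoint interpolant, which relies crucially on $q = 1$. For $q \ge 2$ the interior degrees of freedom of $Y$ on $I_n$ are not pinned down by nodal equivalence, so one would instead need to control the higher interpolation modes directly from the variational equation \eqref{eq:equiv:FEM} and the quadrature definitions, a strictly harder task. I therefore expect the cG(1) restriction to be doing the real work here, with the regularity requirement on $y$ and the inheritance of ``sufficiently small $k$'' from the nodal/stability estimate being the only other hypotheses to keep track of.
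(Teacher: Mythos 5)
Your proposal is correct and follows essentially the same route as the paper's own proof: the nodal $O(k^2)$ bound from equivalence, the piecewise-linear interpolant of the exact solution (your $\pi y$, the paper's $I$), Lemma~\ref{lem:interp:stab} with $\Lambda = 1$, and the triangle inequality. Your closing remark on why the argument hinges on $q=1$ is a useful observation the paper does not make explicit, but the core argument is identical.
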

		\begin{proof}
			By equivalence shown above we have that
			\begin{align}
				\|y(t_n) - Y(t_n) \| \leq C k^2 \qquad \forall \; n = 0,\ldots,N-1.	
			\end{align}
			Let $I(t)$ be the continuous piecewise linear interpolant through the points $(t_n,y(t_n))$.  By definition, the restriction of $Y(t)$ to $I_n$ linearly interpolates the values $Y_n$ and $Y_{n+1}$.  Using  interpolation theory \cite{Brenner:2008} and Lemma \ref{lem:interp:stab} we have,
			\begin{align}
				\| y - Y \|_{\infty,[0,T]} &= \max_n \| y-Y\|_{\infty,I_n} \\
				&\leq \max_n \| y-I\|_{\infty,I_n} + \max_n \| I-Y\|_{\infty,I_n} \notag \\
				& \leq C_1k^2 + \max_n \Big\{ \max \big( | y(t_n) - Y(t_n) |, |y(t_{n+1}) - Y(t_{n+1})| \big) \Big \} \notag \\
				&\leq (C_1+C)k^2, \notag
			\end{align}
		where the constant $\Lambda = 1$ from Lemma \ref{lem:interp:stab},  since the linear Lagrange basis functions are positive and sum to unity, and $C_1$ is a constant from standard interpolation theory.  Therefore due to the convergence of the IMEX scheme, the finite element solution also converges.
		\end{proof}

\section{A posteriori Analysis}
\label{sec:apost_ana}
In this section, we derive a posteriori error estimates based on adjoint operators. In particular,  we derive estimates for the quantity $(y(T) - Y(T),\psi)$ where the vector $\psi$ specifies the QoI, see \eqref{eq:qoi}. 
The a posteriori analysis follows from the equivalence of the Runge-Kutta IMEX  scheme \eqref{eq:imex_rk} with the finite element method \eqref{eq:equiv:FEM}. Throughout this section, $y$ denotes the solution to the continuous ODE problem \eqref{eq:model_ode} whereas $Y$ represents the solution to the finite element method in \eqref{eq:equiv:FEM}. Theorem~\ref{thm:equiv_fem_imex} and \ref{thm:convergence} ensure the  error analysis of the finite element solution also applies to the Runge-Kutta IMEX  solution.

\subsection{Adjoint Problem}
There is no unique definition for adjoint operators corresponding to nonlinear operators. We employ  a definition which is  standard for a posteriori analysis~\cite{Estep:Larson:00}.  We consider the linearized operator,
\begin{equation}
\overline{H_{y,Y}}  = \int_0^1  \frac{d f (z)}{d y} + \frac{d g(z)}{d y} \, ds.
\end{equation}
where $z = sy + (1-s)Y$.  By the chain rule and the Fundamental Theorem of Calculus, this definition implies,
\begin{equation}
\label{eq:H_lin_action}
\begin{aligned}
\overline{H_{y,Y}}  (y - Y) &= \int_0^1  \frac{d f (z)}{d y} (y-Y)+ \frac{d g(z)}{d y}  (y-Y) \, d s\\
& = \int_0^1 \frac{d f(z)}{d s} + \frac{d g(z)}{d s} \, ds = \left ( f(y) - f(Y) \right) + \left(g(y) - g(Y) \right).
\end{aligned}
\end{equation}
The linearized operator is used to define the adjoint problem for the quantity-of-interest,
\begin{equation}
\label{eq:adj}
\begin{cases}
-\dot{\phi} = \overline{H_{y,Y}}^\top \phi, \qquad t  \in (T,0],\\
\phi(T) = \psi.
\end{cases}
\end{equation}
 Notice that the
adjoint problem is solved backwards in time.

\subsection{Error Representations}

Let $e = y - Y$ be the error. We employ the notation $\xi_n$ to denote the value at time $t_{n}$ for some function $\xi$.
\begin{lem}[Error Representation on an Interval]
On each interval $I_n$ we have,
\begin{equation}
\label{eq:one_interval}
(e_{n+1}  ,  \phi_{n+1}) = (e_{n}  ,  \phi_{n}) + \langle f(Y) + g(Y) - \dot{Y}, \phi \rangle_{I_n}.
\end{equation}
\end{lem}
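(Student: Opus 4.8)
The plan is to start from the product $(e,\phi)$ on $I_n$, differentiate/integrate by parts in time, and then feed in the two defining relations at our disposal: the exact ODE \eqref{eq:model_ode} governing $y$, and the adjoint equation \eqref{eq:adj} governing $\phi$. Concretely, since $Y\in\Ccal^q$ is piecewise polynomial, $\dot Y$ is well defined on $I_n$, so $e=y-Y$ is (piecewise) $H^1$ and integration by parts on $I_n$ gives
\begin{align}
(e_{n+1},\phi_{n+1}) - (e_n,\phi_n) = \langle \dot e,\phi\rangle_{I_n} + \langle e,\dot\phi\rangle_{I_n}. \notag
\end{align}
This is the only structural identity needed; everything else is substitution.

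Next I would rewrite each of the two terms on the right. For the first, use $\dot e = \dot y - \dot Y = f(y)+g(y) - \dot Y$ from \eqref{eq:model_ode}. For the second, use the adjoint equation in the form $\dot\phi = -\overline{H_{y,Y}}^\top\phi$, so that
\begin{align}
\langle e,\dot\phi\rangle_{I_n} = -\langle e, \overline{H_{y,Y}}^\top\phi\rangle_{I_n} = -\langle \overline{H_{y,Y}}\,e,\phi\rangle_{I_n}, \notag
\end{align}
where the last step is just the definition of the matrix transpose inside the Euclidean inner product (pointwise in $t$). Now invoke the linearization identity \eqref{eq:H_lin_action}, which says precisely $\overline{H_{y,Y}}(y-Y) = \big(f(y)-f(Y)\big) + \big(g(y)-g(Y)\big)$, to turn this into $-\langle f(y)-f(Y) + g(y)-g(Y),\phi\rangle_{I_n}$.

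Combining the two pieces, the $f(y)$ and $g(y)$ contributions cancel and we are left with $\langle f(Y)+g(Y)-\dot Y,\phi\rangle_{I_n}$, which is exactly \eqref{eq:one_interval}. I do not expect any real obstacle here: the proof is a one-line integration by parts followed by bookkeeping, and the only point that requires care is applying \eqref{eq:H_lin_action} with the correct sign coming from the transpose and from the backward-in-time adjoint equation. One should also note explicitly that $\phi$ is continuous across $t_n$ (it solves \eqref{eq:adj} on all of $(T,0]$), so the boundary values $\phi_n,\phi_{n+1}$ are unambiguous even though $\dot Y$ may jump there.
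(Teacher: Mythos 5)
Your proposal is correct and is essentially the paper's own argument: the paper starts from $0 = \langle e, -\dot{\phi} - \overline{H_{y,Y}}^\top \phi\rangle_{I_n}$ and integrates by parts, which is just a rearrangement of your product-rule identity, and both proofs then invoke the ODE for $\dot e$ and the linearization identity \eqref{eq:H_lin_action} to cancel the $f(y)+g(y)$ terms. No gaps.
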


\begin{proof}
The proof is standard, e.g. see Section 8.1 in \cite{eehj_actanum_95}.  For completeness we give it here.  We begin by taking the $L^2(I_n)$ inner product of $e$ with the differential equation \eqref{eq:adj}.

\begin{align}
    0 &= \< e, -\dot{\f} - \overline{H_{y,Y}}^\top \f \>_{I_n} \notag \\
    &= \< e, -\dot{\f} \>_{I_n} - \< \overline{H_{y,Y}}^\top \f \>_{I_n} \notag \\
    &= \<\dot{e}, \f \>_{I_n} + (e_n, \f_n) - (e_{n+1}, \f_{n+1}) - \< \overline{H_{y,Y}}^\top \f \>_{I_n}. \notag
\end{align}
Now using \eqref{eq:H_lin_action} and \eqref{eq:model_ode} we obtain
\begin{align*}
    0 &=\<\dot{e}, \f \>_{I_n} + (e_n, \f_n) - (e_{n+1}, \f_{n+1}) \\
     &\qquad + \<f(Y) + g(Y), \f \>_{I_n} - \< f(y) + g(y), \f \>_{I_n} \\
     &= (e_n, \f_n) - (e_{n+1}, \f_{n+1}) + \<f(Y) + g(Y) - \dot{Y}, \f \>_{I_n}.
\end{align*}
Rearranging we obtain \eqref{eq:one_interval}.
\end{proof}

\begin{thm}[Error Representation]
If $y(t)$ and $Y(t)$ are solutions of \eqref{eq:model_ode} and \eqref{eq:equiv:FEM} respectively, and $\f$ is a solution of the adjoint problem \eqref{eq:adj}, then the error in the quantity-of-interest defined by $\psi$ is given by,
\label{thm:err_rep}
\begin{equation}
\label{err:imex_1}
{Q}(y - Y) = (y(T) - Y(T)  ,  \psi) =E1 + E2 + E3,
\end{equation}
where,
\begin{equation}
E1 = \sum_{n=0}^{N-1} E1_n, \qquad E2 = \sum_{n=0}^{N-1} E2_n, \qquad E3 = \sum_{n=0}^{N-1} E3_n,
\end{equation}
and
\begin{equation}
\begin{aligned}
&E1_n =    \langle - \dot{Y}, \phi  - \pi_n \phi \rangle_{I_n} + \langle f(\mathcal{I} Y) , \phi - \pi_n \phi \rangle_{{I_n},Q^f} +  \langle g(\mathcal{I} Y)  , \phi- \pi_n \phi \rangle_{{I_n},Q^g},\\
&E2_n =    \langle f(Y) , \phi \rangle_{I_n} - \langle f(\mathcal{I} Y) , \phi \rangle_{{I_n},Q^f}, \\
&E3_n =     \langle g(Y), \phi \rangle_{I_n} -   \langle g(\mathcal{I} Y)  , \phi \rangle_{{I_n},Q^g}.
\end{aligned}
\end{equation}
Here $\pi_n \phi$ represents a projection of $\phi\lvert_{I_n}$ onto
the space $\mathcal{P}^{q-1}(I_n)$. The terms $E1$, $E2$ and $E3$ represent the discretization, explicit and implicit contributions to the error respectively.
\end{thm}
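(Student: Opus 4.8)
The plan is to sum the interval-wise error representation \eqref{eq:one_interval} over all intervals $I_n$, exploit the telescoping structure of the left-hand side together with the initial/terminal conditions, and then insert the finite element equation \eqref{eq:equiv:FEM} in a form that introduces the projection $\pi_n\phi$ via Galerkin orthogonality. First I would sum \eqref{eq:one_interval} from $n=0$ to $N-1$; the telescoping collapses the boundary terms to $(e_N,\phi_N) - (e_0,\phi_0)$, which equals $(y(T)-Y(T),\psi)$ since $\phi_N = \psi$ by \eqref{eq:adj} and $e_0 = y(0)-Y(0) = 0$ by the initial conditions in \eqref{eq:model_ode} and \eqref{eq:equiv:FEM}. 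This yields
\begin{equation}
Q(y-Y) = \sum_{n=0}^{N-1} \langle f(Y)+g(Y)-\dot{Y}, \phi\rangle_{I_n}.
\end{equation}

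Next I would rewrite each summand by adding and subtracting the quadrature-and-interpolation terms that appear in \eqref{eq:equiv:FEM}. Specifically, on $I_n$ write
$\langle f(Y)+g(Y)-\dot{Y},\phi\rangle_{I_n}$
as the sum of
$\langle -\dot{Y}, \phi\rangle_{I_n} + \langle f(\mathcal{I}Y),\phi\rangle_{I_n,Q^f} + \langle g(\mathcal{I}Y),\phi\rangle_{I_n,Q^g}$
(call this $\mathcal{A}_n$),
plus
$\langle f(Y),\phi\rangle_{I_n} - \langle f(\mathcal{I}Y),\phi\rangle_{I_n,Q^f}$
(which is exactly $E2_n$),
plus
$\langle g(Y),\phi\rangle_{I_n} - \langle g(\mathcal{I}Y),\phi\rangle_{I_n,Q^g}$
(which is exactly $E3_n$).
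The remaining piece $\mathcal{A}_n$ must then be shown to equal $E1_n$. To see this, note that $\pi_n\phi \in \mathcal{P}^{q-1}(I_n)$ is an admissible test function $v_n$ in \eqref{eq:equiv:FEM}, so
$\langle \dot{Y}, \pi_n\phi\rangle_{I_n} = \langle f(\mathcal{I}Y),\pi_n\phi\rangle_{I_n,Q^f} + \langle g(\mathcal{I}Y),\pi_n\phi\rangle_{I_n,Q^g}$;
subtracting this identity from $\mathcal{A}_n$ replaces every occurrence of $\phi$ by $\phi - \pi_n\phi$, giving precisely $E1_n$. Summing the three collections over $n$ produces $E1$, $E2$, $E3$ and completes the identity \eqref{err:imex_1}.

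The main obstacle is essentially bookkeeping rather than analysis: one must be careful that the interpolation operator $\mathcal{I}$ and the two quadrature rules $Q^f$, $Q^g$ appear in exactly the same combination in $\mathcal{A}_n$ as in the weak form \eqref{eq:equiv:FEM}, so that the subtraction of the Galerkin identity with test function $\pi_n\phi$ goes through cleanly term by term. A secondary point worth checking is that the interpolation identity $\mathcal{I}^n Y(t_{n+d_i}) = \tilde Y_i$ (already used in the proof of Theorem~\ref{thm:equiv_fem_imex}) is what guarantees the quadrature-weighted terms are well defined; but since $\mathcal{I}Y$ is a fixed polynomial on each $I_n$, no regularity subtleties arise. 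There is nothing delicate about convergence here — this is a pure algebraic decomposition of the error, valid for any $\phi$, and in particular for the adjoint solution. I would present the computation interval-by-interval and then sum, mirroring the structure of the preceding lemma's proof.
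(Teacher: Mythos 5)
Your proposal is correct and follows essentially the same route as the paper's own proof: summing the interval-wise representation, telescoping with $e_0=0$ and $\phi_N=\psi$, adding and subtracting the quadrature terms to isolate $E2_n$ and $E3_n$, and invoking Galerkin orthogonality with the admissible test function $\pi_n\phi$ to convert the remaining terms into $E1_n$. No gaps.
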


\begin{proof}
Adding \eqref{eq:one_interval} over all intervals for $n = 0, \ldots, N-1$ we have,
\begin{equation}
\label{eq:summed_err}
\sum_{n=0}^{N-1}(e_{n+1}  ,  \phi_{n+1}) = \sum_{n=0}^{N-1}\left[(e_{n}  ,  \phi_{n}) + \langle f(Y) + g(Y) - \dot{Y}, \phi \rangle_{I_n} \right].
\end{equation}
Now $\sum_{n=0}^{N-1}(e_{n+1}  ,  \phi_{n+1})  = \sum_{n=1}^{N-1} (e_{n}  ,  \phi_{n}) + (e_{N}  ,  \phi_{N}) $. Using this in \eqref{eq:summed_err} along with the fact that the numerical solution satisfies the initial condition, i.e. $e_0 = 0$, we arrive at,
\begin{equation}
\label{eq:fem_err_adj_a}
 (e_{N}  ,  \phi_{N})  = \sum_{n=0}^{N-1} \left[ \langle f(Y) + g(Y) - \dot{Y}, \phi \rangle_{I_n} \right].
\end{equation}
Adding and subtracting $\sum_{n=0}^{N-1} \langle f(\mathcal{I} Y) , \phi \rangle_{{I_n},Q^f} $ and $\sum_{n=0}^{N-1}\langle g(\mathcal{I} Y)  , \phi \rangle_{{I_n},Q^g}$ to the right hand side of \eqref{eq:fem_err_adj_a}  leads to,
\begin{equation}
\label{eq:fem_err_adj_b}
 (e_{N}  ,  \phi_{N})  = \sum_{n=0}^{N-1} \left[  \langle - \dot{Y}, \phi   \rangle_{I_n} + \langle f(\mathcal{I} Y) , \phi  \rangle_{{I_n},Q^f} +  \langle g(\mathcal{I} Y)  , \phi  \rangle_{{I_n},Q^g}  + E2_n + E3_n \right].
\end{equation}
Now, since $\pi_n \phi \in \mathcal{P}^{q-1}(I_n)$, we have from \eqref{eq:equiv:FEM} for $n = 0, \ldots, N-1$,
\begin{equation}
\label{eq:galerkin_ortho}
\langle  \dot{Y}, \pi_n \phi  \rangle_{I_n} - \langle f(\mathcal{I} Y) , \pi_n \phi  \rangle_{{I_n},Q^f}  - \langle g(\mathcal{I} Y)  , \pi_n \phi \rangle_{{I_n},Q^g} = 0, 
\end{equation}
Combining \eqref{eq:fem_err_adj_b} and \eqref{eq:galerkin_ortho} completes the proof.
\end{proof}

\subsection{A posteriori analysis for time dependent quantities-of-interest}
The QoI defined in \eqref{eq:qoi} is based only on the final time of the solution. The analysis above is easily modified for other quantities of interest. For example consider the time dependent QoI,
\begin{equation}
\tilde{Q}(y) \equiv \int_{0}^{T} (y(t), \tilde{\psi}(t) ) \, dt,
\end{equation}
 where the time dependent function, $\tilde{\psi}: \Rbb \rightarrow \Rbb^m$, specifies the QoI. Define the adjoint problem as,
\begin{equation}
\label{eq:adj_2}
\begin{cases}
-\dot{\tilde{\phi}} = \overline{H_{y,Y}}^\top \tilde{\phi} + \tilde{\psi}, \qquad t  \in (T,0]\\
\tilde{\phi}(T) = 0.
\end{cases}
\end{equation}
Note that this adjoint problem differs from \eqref{eq:adj} in the
initial conditions and data on the right hand side. This leads to the
following error representation.
\begin{thm}
If $y(t)$ and $Y(t)$ are solutions of \eqref{eq:model_ode} and \eqref{eq:equiv:FEM} respectively, and $\tilde{\phi}$ is a solution of the adjoint problem \eqref{eq:adj_2}, then the error in the quantity-of-interest specified by $\tilde{\psi}$ is given by,
\begin{equation}
\label{err:imex_1_diff_qoi}
\tilde{Q}(y - Y) = \int_0^T(y(t) - Y(t)  ,  \tilde{\psi}(t)) =\tilde{E1} + \tilde{E2} + \tilde{E3},
\end{equation}
where,
\begin{equation}
\begin{aligned}
&\tilde{E1} =   \sum_{n=0}^{N-1} \langle - \dot{Y}, \tilde{\phi}  - \pi_n \tilde{\phi} \rangle_{I_n} + \langle f(\mathcal{I} Y) , \tilde{\phi} - \pi_n \tilde{\phi} \rangle_{{I_n},Q^f} +  \langle g(\mathcal{I} Y)  , \tilde{\phi}- \pi_n \tilde{\phi} \rangle_{{I_n},Q^g},\\
&\tilde{E2} =  \sum_{n=0}^{N-1}  \langle f(Y) , \tilde{\phi} \rangle_{I_n} - \langle f(\mathcal{I} Y) , \tilde{\phi} \rangle_{{I_n},Q^f}, \\
&\tilde{E3} =  \sum_{n=0}^{N-1}   \langle g(Y), \tilde{\phi} \rangle_{I_n} -   \langle g(\mathcal{I} Y)  , \tilde{\phi} \rangle_{{I_n},Q^g}.
\end{aligned}
\end{equation}
\end{thm}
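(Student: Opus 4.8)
The plan is to mirror the proof of Theorem~\ref{thm:err_rep}, tracking the two structural changes in the adjoint problem~\eqref{eq:adj_2}: the additional source term $\tilde{\psi}$ on the right-hand side, and the homogeneous terminal condition $\tilde{\phi}(T) = 0$ in place of $\phi(T) = \psi$.

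First I would establish the interval-wise identity that replaces \eqref{eq:one_interval}. Taking the $L^2(I_n)$ inner product of $e = y - Y$ against $-\dot{\tilde{\phi}} - \overline{H_{y,Y}}^\top \tilde{\phi} - \tilde{\psi} = 0$, integrating by parts in $\langle e, -\dot{\tilde{\phi}} \rangle_{I_n}$, and using \eqref{eq:H_lin_action} together with the ODE~\eqref{eq:model_ode} exactly as in the proof of the Error Representation on an Interval, one arrives at
\begin{equation*}
(e_{n+1}, \tilde{\phi}_{n+1}) = (e_n, \tilde{\phi}_n) + \langle f(Y) + g(Y) - \dot{Y}, \tilde{\phi} \rangle_{I_n} - \langle e, \tilde{\psi} \rangle_{I_n}.
\end{equation*}
The only new term relative to \eqref{eq:one_interval} is $-\langle e, \tilde{\psi} \rangle_{I_n}$, which carries the QoI data.

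Next I would sum this identity over $n = 0, \ldots, N-1$ and telescope the nodal terms as in the proof of Theorem~\ref{thm:err_rep}. Using $e_0 = 0$ (the numerical solution satisfies the initial condition) and now $\tilde{\phi}_N = \tilde{\phi}(T) = 0$, \emph{both} boundary contributions vanish, leaving
\begin{equation*}
\sum_{n=0}^{N-1} \langle e, \tilde{\psi} \rangle_{I_n} = \sum_{n=0}^{N-1} \langle f(Y) + g(Y) - \dot{Y}, \tilde{\phi} \rangle_{I_n}.
\end{equation*}
The left-hand side is precisely $\int_0^T (y(t) - Y(t), \tilde{\psi}(t))\, dt = \tilde{Q}(y - Y)$. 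The one conceptual point worth flagging is that, whereas in Theorem~\ref{thm:err_rep} the error in the QoI came from the surviving terminal term $(e_N, \phi_N)$, here it instead emerges from the accumulated source terms; this is why the two error representations share the same functional form.

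Finally, the right-hand side is rewritten verbatim as in the proof of Theorem~\ref{thm:err_rep}: add and subtract $\sum_n \langle f(\mathcal{I} Y), \tilde{\phi} \rangle_{I_n, Q^f}$ and $\sum_n \langle g(\mathcal{I} Y), \tilde{\phi} \rangle_{I_n, Q^g}$ to peel off $\tilde{E2}$ and $\tilde{E3}$, and then invoke the Galerkin orthogonality~\eqref{eq:galerkin_ortho} with test function $\pi_n \tilde{\phi} \in \mathcal{P}^{q-1}(I_n)$ to recast the remaining terms as $\tilde{E1}$. There is no substantial obstacle: the argument is a routine adaptation of Theorem~\ref{thm:err_rep}, and the only care needed is the sign bookkeeping in the interval identity and the observation that the homogeneous terminal data is exactly what makes the boundary telescoping collapse cleanly.
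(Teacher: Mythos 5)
Your proposal is correct and follows exactly the route the paper intends: the paper's own proof is simply the remark that it is ``similar to that of Theorem~\ref{thm:err_rep}'', and your write-up supplies precisely the right adaptation, with the source term $-\langle e,\tilde{\psi}\rangle_{I_n}$ appearing in the interval identity and the homogeneous terminal condition $\tilde{\phi}(T)=0$ killing the boundary term so that the QoI emerges from the accumulated data instead of from $(e_N,\phi_N)$. The sign bookkeeping and the final Galerkin-orthogonality step are all handled correctly.
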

\begin{proof}
    The proof is similar to that of Theorem~\ref{thm:err_rep}.
\end{proof}

\subsection{Extension of the analysis for space-time discretization of
  PDEs}
The primary aim of this article is to quantify the error due to time
integration using the IMEX Runge-Kutta schemes. Hence the analysis presented in this article deals with the error in the
numerical solution of an ODE system. As mentioned earlier, such ODE
systems often arise from spatial discretization of partial
differential equations. 
 In this article, we ignore the error in the solution to the PDE due
 to this spatial discretization. The extension of the analysis to the
 case of PDEs to quantify the effect of spatial discretization follows directly from the analysis of ODEs, e.g. see
 \cite{CEG+2015, Collins:2014} for details.

\section{Numerical Experiments}
\label{sec:num_examples}

\subsection{Algorithmic Details}
In this section we consider numerical examples for systems of ODEs of
the form \eqref{eq:model_ode} arising from the spatial discretization
of PDEs. The spatial derivatives are discretized  using a second order
central finite difference scheme in space. The spatial discretization
parameter is referred to as $h$. 

In order to estimate the error with the error representation formulas, we must,
\begin{itemize}
    \item Solve the forward problem with an IMEX scheme to obtain the solution $\{Y_n\}$.
    \item Determine the equivalent finite element solution $Y(t)$.
    \item Solve the associated adjoint problem using the finite element solution $Y(t)$ in the operator $\overline{H_{y,Y}}$.
\end{itemize}
Let us assume we are using an IMEX scheme of order $p$.  To find the equivalent finite element solution we chose an order $q$ for the finite element method.  This order is chosen to correspond to the IMEX scheme by setting $q = p-1$.  The intermediate values of the solution on each subinterval $I_n$ are determined by solving a simple mass-matrix linear system.

Finally, the adjoint equation is solved.  In theory,
the adjoint is obtained by linearizing around a combination of the
discrete solution and the true solution. In practice, we linearize
around the discrete solution only~\cite{Estep:Larson:00}. Moreover,
the adjoint solution needs to be approximated numerically. We
approximate the adjoint solution  using the cG($q$+1) finite element
method. The cG($q$+1) finite element
method for the adjoint equation \eqref{eq:adj} is  defined interval-wise by,
		
		Find $\Phi \in \Ccal^{q+1}$ such that $\Phi(T) = \psi$ and for $n=N-1, \ldots, 0$,
		\begin{align}
				\< -\dot{\Phi},v_n\>_{I_n} =  \<\overline{\tilde{H}_{y,Y}}^\top\Phi, v_n \>_{I_n} , \quad \forall \; v_n \in \Pcal^{q}(I_n),
		\end{align}
where $\overline{\tilde{H}_{y,Y}}$ is obtained by substituting $y = Y$ in the expression for $\overline{{H}_{y,Y}}$ ~\cite{Estep:Larson:00}. Notice that the adjoint problem is solved backwards in time. That is, the initial conditions are posed at time $t = T$, which corresponds to the final time for the original ODE~\eqref{eq:model_ode}. The adjoint solution is computed interval by interval by starting at the  interval $I_{N-1}$, then proceeding to $I_{N-2}$ and so on until the interval $I_0$ is reached.  Given the nodally equivalent finite element solution $Y(t)$, the computation of $\overline{\tilde{H}_{y,Y}}$ on any interval $I_n$ is straightforward.
The higher order approximation of the adjoint
problem ensures that the error estimates are accurate.

\subsection{Examples}
In this paper we examine three IMEX schemes in particular, IMEX Midpoint(1,2,2), IMEX-SSP3(3,3,2) and IMEX-SSP3(4,3,3).  The names of these schemes are standard and use a triplet notation $(s, \s, p)$ where $s$ is the number of stages in the implicit method, $\s$ is the number of stages in the explicit method, and $p$ is the order of the method as a whole.  The Butcher tableaus for these schemes are shown in Tables \ref{tab:midpoint}, \ref{tab:SSP3:2} and \ref{tab:SSP3:3}. The methods considered here consist of an A-stable IMEX integrator (IMEX Midpoint(1,2,2)), and two IMEX integrators of second- and third-order that have strong-stability-preserving properties for the 
explicit operators and a L-stable property for the implicit integrator (IMEX-SSP3(3,3,2) and IMEX-SSP3(4,3,3)). Here the IMEX midpoint  represents a method of interest for parabolic
problems with smooth solutions and sufficient levels of physical dissipation \cite{ARS97}. The SSP L-stable methods represent methods of interest for systems with hyperbolic character or parabolic behavior with limited dissipation. In this context numerical solutions with poorly resolved gradients and/or discontinuities  can result \cite{PR2000,IMEXRKHyperbolicStiffRelax2005, ARS97,HuRu07}. 

		\begin{table}[h]
		\begin{center}
			\begin{tabular}{l|ll}
			0   & 0   & 0 \\
			1/2 & 1/2 & 0 \\ \hline
			    & 0   & 1
			\end{tabular}
		\hspace{.5in}
			\begin{tabular}{l|ll}
			0   & 0 & 0   \\
			1/2 & 0 & 1/2 \\ \hline
			    & 0 & 1  
			\end{tabular}
			\caption{Butcher Tableau for the explicit(left) and implicit(right) portion of the IMEX scheme Midpoint(1,2,2).}
			\label{tab:midpoint}
		\end{center}
		\end{table}

		\begin{table}[h!]
		\begin{center}
			\begin{tabular}{l|lll}
			0   & 0   & 0   & 0   \\
			1   & 1   & 0   & 0   \\
			1/2 & 1/4 & 1/4 & 0   \\ \hline
			    & 1/6 & 1/6 & 2/3
			\end{tabular}
		\hspace{.5in}
			\begin{tabular}{l|lll}
			$\g$   & $\g$       & 0    & 0    \\
			1-$\g$ & 1-$2\g$    & $\g$ & 0    \\
			1/2    & 1/2 - $\g$ & 0    & $\g$ \\ \hline
			       & 1/6        & 1/6  & 2/3 
			\end{tabular}
			\caption{Butcher Tableau for the explicit(left) and implicit(right) portion of IMEX-SSP3(3,3,2). $\g = 1- \frac{1}{\sqrt{2}}$}
			\label{tab:SSP3:2}
		\end{center}
		\end{table}	

		\begin{table}[h!]
		\begin{center}
			\begin{tabular}{c|cccc}
			0   & 0   & 0   & 0 & 0  \\
			0   & 0   & 0   & 0 & 0  \\
			1   & 0   & 1   & 0 & 0  \\
			1/2 & 0 & 1/4 & 1/4  & 0  \\ \hline
			      & 0 & 1/6 & 1/6 & 2/3
			\end{tabular}
		\hspace{.15in}
			\begin{tabular}{c|cccc}
			$\a$   & $\a$   & 0   & 0 & 0  \\
			0   & $-\a$   & $\a$   & 0 & 0  \\
			1   & 0   & $1-\a$   & $\a$ & 0  \\
			1/2 & $\b$ & $\eta$ & $1/2 - \b - \eta - \a$  & $\a$  \\ \hline
			      & 0 & 1/6 & 1/6 & 2/3
			\end{tabular}
			\caption{Butcher Tableau for the explicit(left) and implicit(right) portion of IMEX-SSP3(4,3,3).
			$\a = 0.24169426078821, \b = 0.06042356519705, \eta = 0.12915286960590$}
			\label{tab:SSP3:3}
		\end{center}
		\end{table}

The approximation of the adjoint solution  leads to an ``error
estimate'' from the error representation \eqref{err:imex_1}.
The effectivity ratio measures the accuracy of the estimate and is defined as,
\begin{equation*}
\rho_{\rm eff} = \frac{\mbox{Estimated error}}{\mbox{True error}} \, .
\end{equation*}
An accurate error estimate has an effectivity ratio close to one. In our examples, the true solution is unknown and is approximated to a high degree of accuracy using {\sc Matlab}'s ODE solver.

We present four examples.
The first two examples  arise from the finite difference discretization of scalar-valued PDEs which were previously considered in~\cite{ARS97}. In these examples we chose $f$ to be the term arising from the first-order spatial derivatives, while $g$ represents the term arising from second-order spatial derivatives. In the third example we illustrate how the choice of $f$ and $g$ effects the components of the error estimate. The final example is a simplified 1D Magneto-Hydrodynamics problem.

\subsubsection{Linear PDE} \label{sec:linear_pde_fd}
Consider the scalar valued linear PDE
\begin{equation}
\begin{cases}
\begin{gathered}
\begin{aligned}
&\dot{u}  + \sin(2 \pi x)u_x = \gamma u_{xx},\quad && (x,t) \in  [0,1]\times (0,T],\\
&u(x,0) = \sin(2 \pi x),\quad && x \in  [0,1] , \label{eq:lin:pde}
\end{aligned}
\end{gathered}
\end{cases}
\end{equation}
with periodic boundary conditions.

We choose the spatial discretization
parameter as $h = 1/40$ and the QoI as $\psi = [ \mathbf{1}  \quad \mathbf{0}]^\top$,
where  $\mathbf{1}$ is an  $(m/2+1)$ vector of all ones and
$\mathbf{0}$ is an $(m/2-1)$ vector of all zeros. QoIs of this form often arise from evaluating spatial integrals of the
PDE solution, $u(x,t)$, at the final time, i.e.  
$C \int_a^b u(x,t) \, dx$. 
\begin{table}[!ht]
\centering
\begin{tabular}{ l| c |c |c |c | c }
\toprule
Scheme & Comp. Err. &Eff. Ratio & $E1$ & $E2$ & $E3$\\
\midrule
Mid(1,2,2)   &-1.50E-06 &0.99 &1.96E-06 &5.01E-06 &-8.48E-06  \\
SSP3(3,3,2)  &6.11E-07  &1.00 & 1.61E-06 & 2.19E-06& -3.20E-06  \\
SSP3(4,3,3)  &7.89E-09  &1.14 &4.97E-09 &-2.56E-07 & 2.59E-07\\
\bottomrule
\end{tabular}
\caption{Results for  the problem in \S \ref{sec:linear_pde_fd} with the final time $T = 2.0$, $k_n = 1/40$, and $\gamma = 0.1$ using the Midpoint(1,2,2), IMEX-SSP3(3,3,2) and IMEX-SSP3(4,3,3)schemes.}
\label{tab:example_1_additional_1}
\end{table}
\begin{table}[!ht]
\centering
\begin{tabular}{ l| c |c |c |c | c }
\toprule
Scheme & Comp. Err. &Eff. Ratio & $E1$ & $E2$ & $E3$\\
\midrule
Mid(1,2,2)   &  -1.71E-05 & 0.99 & 4.036E-06 & -2.08E-05 &-3.37E-07              \\
SSP3(3,3,2)  &   1.08E-06 & 1.00 & 3.27E-06  & -2.17E-06 &-1.08E-08           \\
SSP3(4,3,3)  &    6.41E-07& 1.00 & 1.75E-08  & 5.37E-07  &8.72E-08          \\
\bottomrule
\end{tabular}
\caption{Results for  the problem in \S \ref{sec:linear_pde_fd} with the final time $T = 2.0$, $k_n = 1/40$, and $\gamma = 0.01$ using the Midpoint(1,2,2), IMEX-SSP3(3,3,2) and IMEX-SSP3(4,3,3)schemes.}
\label{tab:example_1_additional_2}
\end{table}

In the first set of numerical experiments, we set the time step as $k_n = 1/40$.  
We report results for 2 different levels of diffusion coefficient, $\gamma = 0.1~,0.01$, and evolve the solution to a final time of
$T = 2.0$. Results are shown in Tables~ \ref{tab:example_1_additional_1} and \ref{tab:example_1_additional_2}  respectively. The results indicate that the error estimate is quite
accurate for all schemes, as shown by the effectivity ratio.

 In the second set of experiments, we increase the time step to $k_n = 1/10$ and solve for two different final times of   $T=1.0$ and $T = 2.0$. 
The results in Tables~ \ref{tab:1} and \ref{tab:2} indicate that the error estimate is quite
accurate for all schemes, as shown by the effectivity ratio.  The error estimates are even accurate when the error is
quite large, as is seen in Table~\ref{tab:2} for the case of the
Midpoint(1,2,2) scheme. The large error in this scheme is due to instabilities that develop in the solution, see Figure \ref{fig:soln_lin_others_nu_0.1}.  The results
for $\gamma = 0.01$ are shown in Tables \ref{tab:3} and \ref{tab:4} for
$T = 1.0$ and $T = 2.0$ respectively. Once again the error estimates
are quite accurate. Moreover, the error estimates indicate instability
in the IMEX-SSP3(4,3,3) solution in addition to the Midpoint(1,2,2)
solution for $T=2.0$. This is also seen in the plots of the solutions
in Figure~\ref{fig:soln_lin_nu_0.01} where we observe that
IMEX-SSP3(3,3,2) remains stable whereas the other two schemes develop
instabilities after a certain time. The IMEX-SSP3(4,3,3)
seems to be more stable than the Midpoint(1,2,2) solution.

\begin{table}[!ht]
\centering
\begin{tabular}{ l| c |c |c |c | c }
\toprule
Scheme & Comp. Err. &Eff. Ratio & $E1$ & $E2$ & $E3$\\
\midrule
Mid(1,2,2)            &-4.53E-03  & 0.99  & -7.43E-02 &   4.26E-02 &  2.72E-02                             \\
SSP3(3,3,2)          &  1.428E-03&   0.99&   3.37E-03&   6.89E-03  &-8.83E-03                         \\
SSP3(4,3,3)         &  -5.83E-04 &  1.00 &  2.13E-04&  -4.58E-03 &  3.78E-03                         \\
\bottomrule
\end{tabular}
\caption{Results for  the problem in \S \ref{sec:linear_pde_fd} with the final time $T = 1.0$, $k_n = 1/10$, and $\gamma = 0.1$ using the Midpoint(1,2,2), IMEX-SSP3(3,3,2) and IMEX-SSP3(4,3,3)schemes.}
\label{tab:1}
\end{table}

\begin{table}[!ht]
\centering
\begin{tabular}{ l| c |c |c |c | c }
\toprule
Scheme & Comp. Err. &Eff. Ratio & $E1$ & $E2$ & $E3$\\
\midrule
Mid(1,2,2)   &  -1.90E+02   &1.00  &-1.07E+03   &1.45E+03  &-5.72E+02              \\
SSP3(3,3,2)  &     2.68E-06   &1.00  & 2.62E-05   &4.64E-05  &-6.99E-05           \\
SSP3(4,3,3)  &    -1.89E-06   &0.99 &-6.30E-05  &-1.91E-05  & 8.11E-05          \\
\bottomrule
\end{tabular}
\caption{Results for  the problem in \S \ref{sec:linear_pde_fd} with the final time $T = 2.0$, $k_n = 1/10$, and $\gamma = 0.1$ using the Midpoint(1,2,2), IMEX-SSP3(3,3,2) and IMEX-SSP3(4,3,3)schemes.}
\label{tab:2}
\end{table}

\begin{figure}[!ht]
\centering
\begin{subfigure}[!ht]{0.46\textwidth}
\centering
\includegraphics[width = \textwidth]{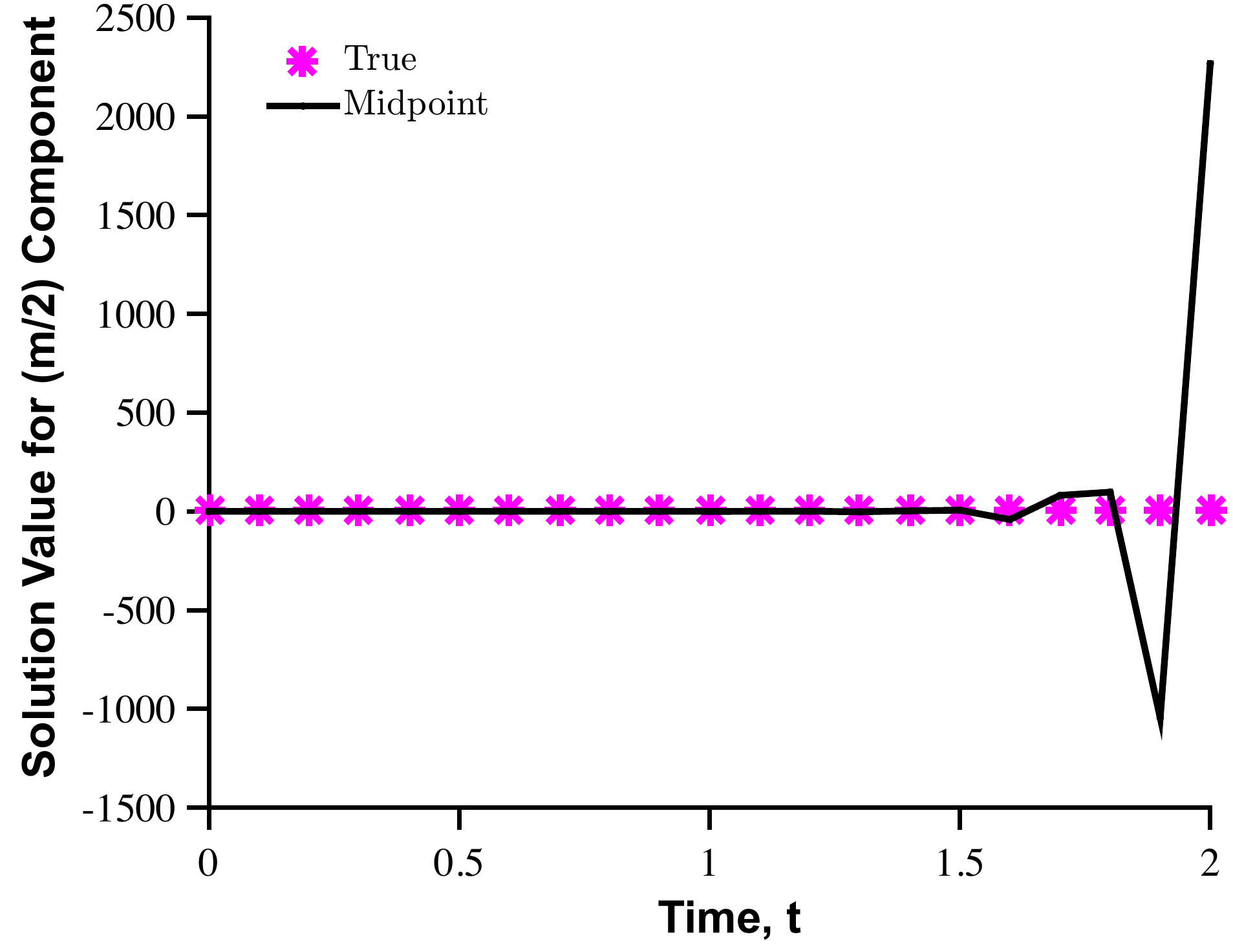} 
\caption{}
\label{fig:soln_lin_others_nu_0.1}
\end{subfigure}
\hfill
\begin{subfigure}[!ht]{0.46\textwidth}
\includegraphics[width = \textwidth]{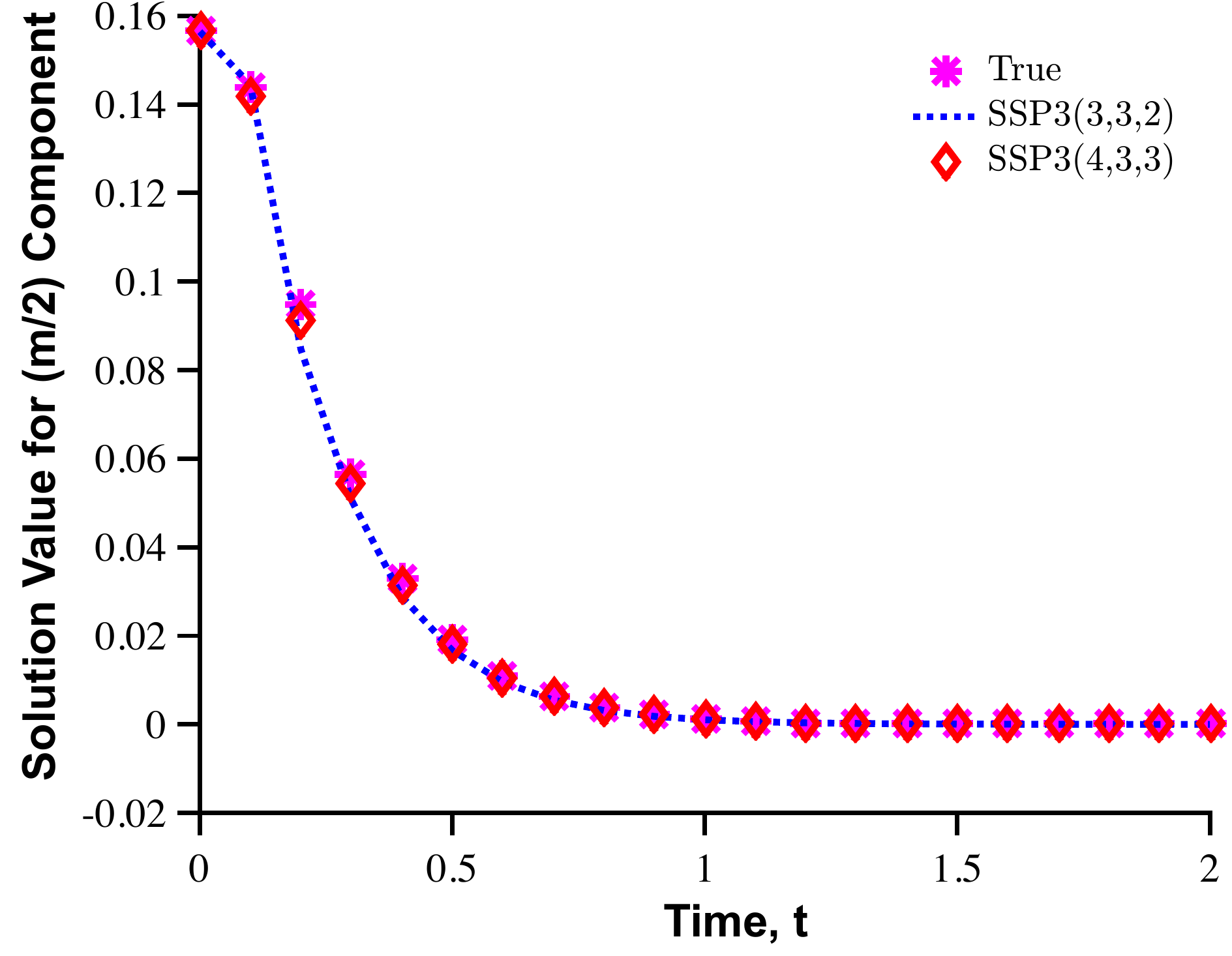} 
\caption{}
\label{fig:soln_lin_mp_nu_0.1}
\end{subfigure}
\caption{Plot of the value of $m/2$th component of the solutions (vertical axis) of \eqref{eq:lin:pde} for $\gamma = 0.1$ and $k_n = 1/10$. The horizontal axis denotes the time, $t$.}
\label{fig:soln_lin_nu_0.1}
\end{figure}

\begin{table}[!ht]
\centering
\begin{tabular}{ l| c |c |c |c | c }
\toprule
Scheme & Comp. Err. &Eff. Ratio & $E1$ & $E2$ & $E3$\\
\midrule
Mid(1,2,2)            &-1.83E-01           &0.99       &-2.27E-1      &9.34E-2  &-4.944E-2 \\
SSP3(3,3,2)          &6.36E-03    & 1.00           &1.12E-2  &-8.63E-3 &4 3.71E-3 \\
SSP3(4,3,3)         & 8.51E-04      &0.99        &-8.72E-3  &2.33E-3 &7.24E-3\\
\bottomrule
\end{tabular}
\caption{Results for  the problem in \S \ref{sec:linear_pde_fd} with the final time $T = 1.0$, $k_n = 1/10$, and $\gamma = 0.01$ using the Midpoint(1,2,2), IMEX-SSP3(3,3,2) and IMEX-SSP3(4,3,3)schemes.}
\label{tab:3}
\end{table}

\begin{table}[!ht]
\centering
\begin{tabular}{ l| c |c |c |c | c }
\toprule
Scheme & Comp. Err. &Eff. Ratio & $E1$ & $E2$ & $E3$\\
\midrule
Mid(1,2,2)   &3.22E+03   &1.00  &-1.41E+03   &3.83E+03   &8.00E+02 \\
SSP3(3,3,2)  &8.02E-03  & 1.00  &-1.96E-03   &8.31E-03   &1.67E-03 \\
SSP3(4,3,3)  &-1.46E+00  & 1.00 & -2.24E+00 & -3.28E-01  & 1.10E+00\\
\bottomrule
\end{tabular}
\caption{Results for  the problem in \S \ref{sec:linear_pde_fd} with the final time $T = 2.0$, $k_n = 1/10$, and $\gamma = 0.01$ using the Midpoint(1,2,2), IMEX-SSP3(3,3,2) and IMEX-SSP3(4,3,3)schemes.}
\label{tab:4}
\end{table}

\begin{figure}[!ht]
\centering
\begin{subfigure}[!ht]{0.48\textwidth}
\centering
\includegraphics[width = \textwidth]{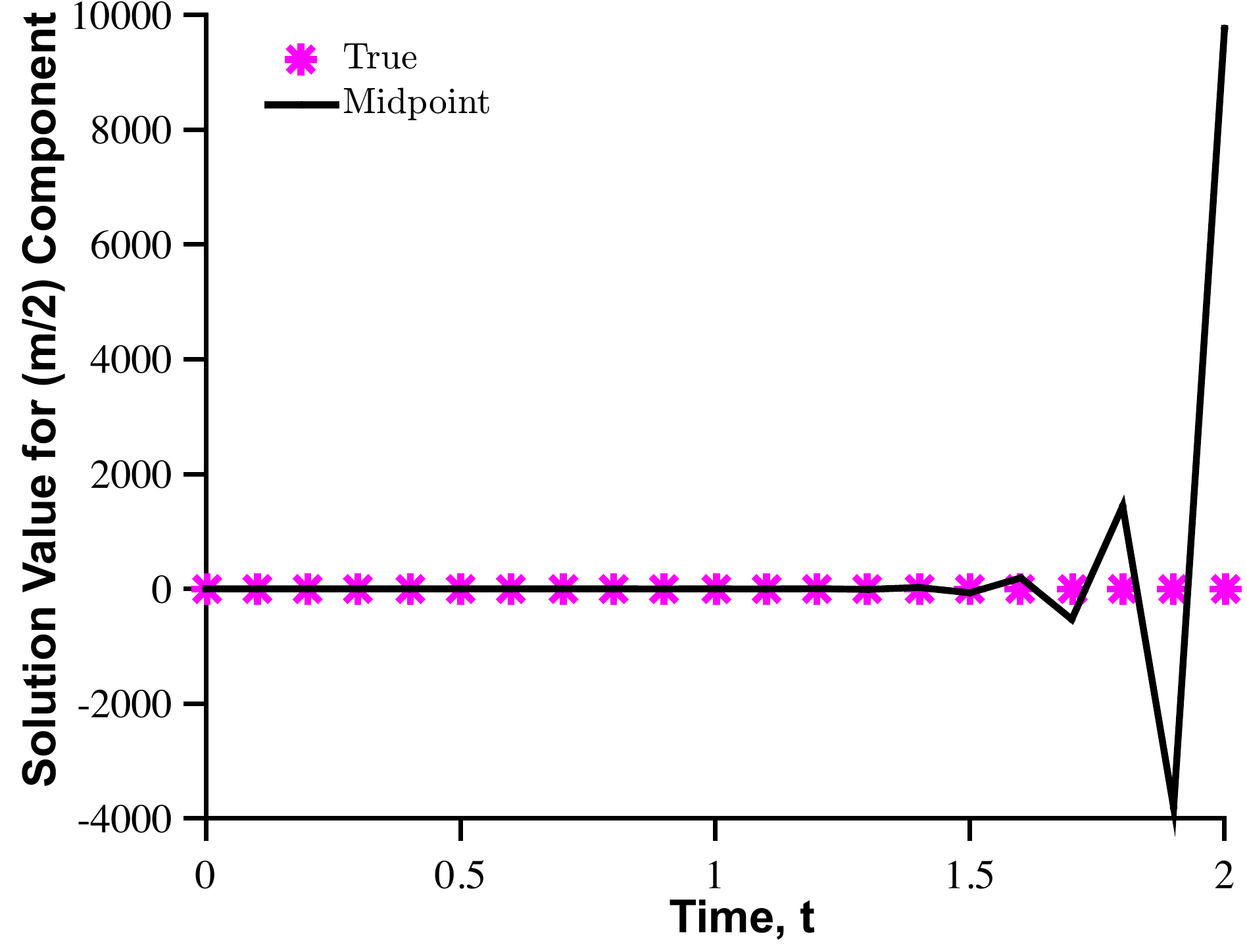} 
\caption{}
\label{fig:soln_lin_others_nu_0.01}
\end{subfigure}
\hfill
\begin{subfigure}[!ht]{0.48\textwidth}
\includegraphics[width = \textwidth]{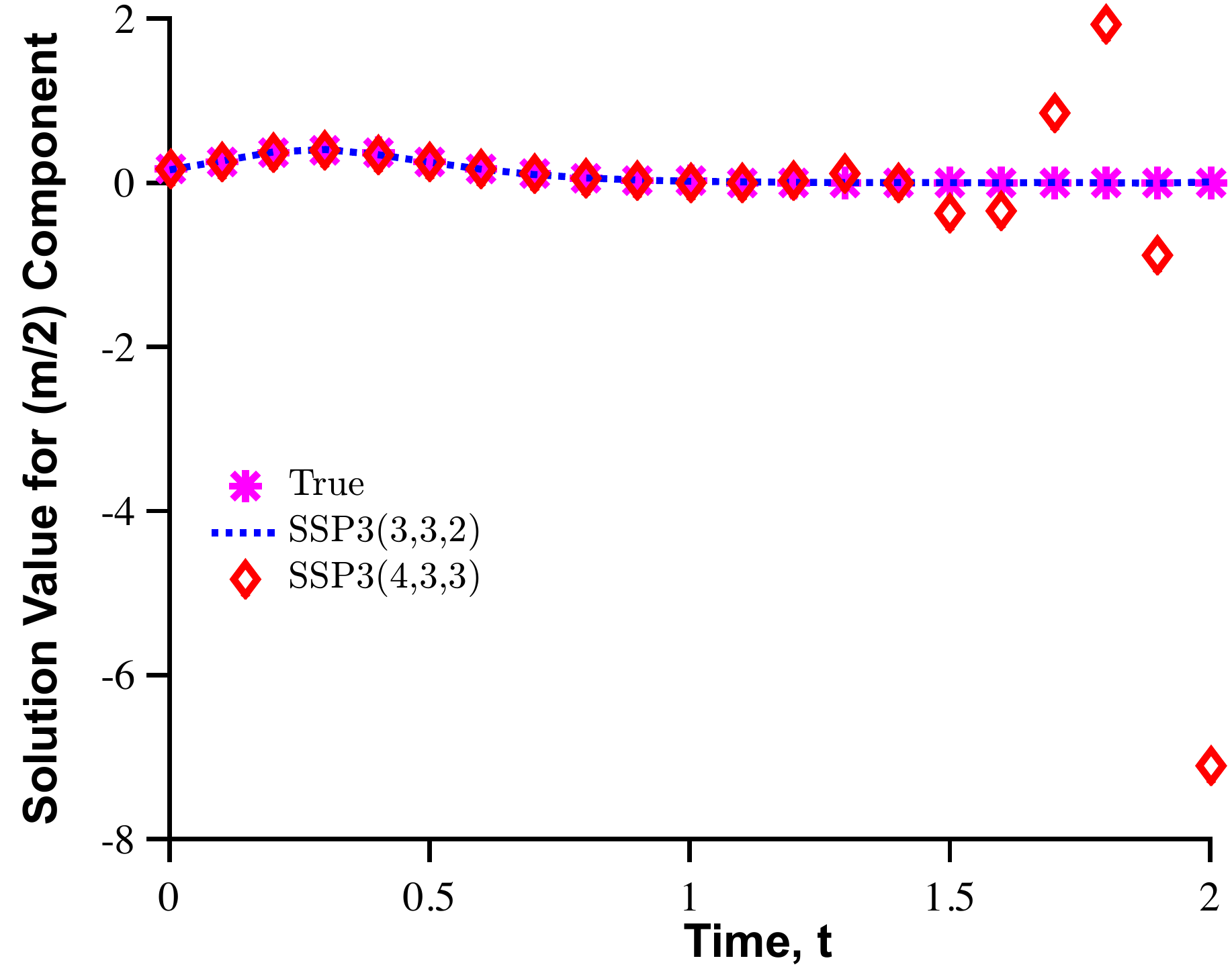} 
\caption{}
\label{fig:soln_lin_mp_nu_0.01}
\end{subfigure}
\caption{Plot of the value of $m/2$th component (vertical axis) of the solutions of \eqref{eq:lin:pde} for $\gamma = 0.01$ and $k_n = 1/10$. The horizontal axis denotes the time, $t$.}
\label{fig:soln_lin_nu_0.01}
\end{figure}

\subsubsection{Damped non-linear Burger's equation} \label{sec:burgers_fd}

The damped non-linear Burger's equation is
\begin{equation}
\label{eq:burgers}
\begin{cases}
\begin{gathered}
\begin{aligned}
&\dot{u} +  u u_x = \gamma u_{xx}, \quad && (x,t) \in  [-1,1]\times (0,T], \\
&u(x,0) = \sin( \pi x),\quad && x \in  [-1,1],
\end{aligned}
\end{gathered}
\end{cases}
\end{equation}
which we consider with periodic boundary conditions. The QoI is again chosen as $\psi = [ \mathbf{1}  \quad \mathbf{0}]^\top$,
where  $\mathbf{1}$ is an  $(m/2+1)$ vector of all ones and
$\mathbf{0}$ is an $(m/2-1)$ vector of all zeros. We choose the
spatial discretization parameter as $h = 1/40$, the time step as
$k_n = 1/20$ and $\gamma = 0.05$.  The results for two different values of the final time,
$T$, are shown in Tables~\ref{tab:7} and \ref{tab:8}. The error
estimate is again quite accurate, with effectivity ratios close to
one. The IMEX-SSP3(4,3,3)  method has the least error for this example.

  \begin{table}[!ht]
\centering
\begin{tabular}{ l| c |c |c |c | c }
\toprule
Scheme & Comp. Err. &Eff. Ratio & $E1$ & $E2$ & $E3$\\
\midrule
Mid(1,2,2)            &-8.13E-03   &1.00  &-1.80E-04  &-1.98E-02   &1.18E-02       \\
SSP3(3,3,2)         &  -6.84E-03   &1.00  &-1.64E-04  &-8.47E-03   &1.79E-03     \\
SSP3(4,3,3)         &  -2.30E-04   &1.00  &1.04E-05   &7.14E-04  &-9.55E-04      \\
\bottomrule
\end{tabular}
\caption{Results for  the problem in \S \ref{sec:burgers_fd} with the final time $T = 1.0$ using the Midpoint(1,2,2), IMEX-SSP3(3,3,2) and IMEX-SSP3(4,3,3)schemes..}
\label{tab:7}
\end{table}

  \begin{table}[!ht]
\centering
\begin{tabular}{ l| c |c |c |c | c }
\toprule
Scheme & Comp. Err. &Eff. Ratio & $E1$ & $E2$ & $E3$\\
\midrule
Mid(1,2,2)            & 1.10E-03   &0.98     &-7.17E-04 & -4.00E-03   &5.82E-03       \\
SSP3(3,3,2)         &  -1.16E-03  & 1.00 &-5.98E-04  &-1.51E-03   &9.50E-04     \\
SSP3(4,3,3)         &   1.35E-04   &0.99   &4.20E-06   &3.59E-04    &-2.28E-04      \\
\bottomrule
\end{tabular}
\caption{Results for  the problem in \S \ref{sec:burgers_fd} with the final time $T = 2.0$ using the Midpoint(1,2,2), IMEX-SSP3(3,3,2) and IMEX-SSP3(4,3,3)schemes.}
\label{tab:8}
\end{table}

\subsubsection{Effect of choice of $f$ and $g$} \label{sec:linear_pde_fd_f_g_choice}

The choice of $f$ and $g$ is often not obvious for complex problems. In this example, we  show how the components of the error estimate capture the effects of the choice of the explicit and implicit parts. To illustrate this, we reverse the choice of $f$ and $g$ in the linear PDE of \S \ref{sec:linear_pde_fd}. That is, we set $f$ as the term arising from $\gamma u_{xx}$ and $g$ as term arising from $\sin(2 \pi x) u_x$. The results for this choice for the same QoI as in \S \ref{sec:linear_pde_fd}  are shown in Table~\ref{tab:9} for $\gamma = 0.075$, $k_n = 1/40$ and $h = 1/20$. We observe that this choice of $f$ and $g$ leads to instability for the Midpoint(1,2,2)  scheme, which has a large error relative to the IMEX-SSP schemes. Moreover, we note that the contribution of the component $E2$, which corresponds to the error due to integration of the explicit term, dominates the error estimate and is significantly larger than the other two contributions, $E1$ and $E3$, as expected. These results are in contrast to the results in Tables~\ref{tab:1}--\ref{tab:4}, where the component $E2$ is not the dominant term.

  \begin{table}[!ht]
\centering
\begin{tabular}{ l| c |c |c |c | c }
\toprule
Scheme & Comp. Err. &Eff. Ratio & $E1$ & $E2$ & $E3$\\
\midrule
Mid(1,2,2)            & 1.75E+06&  1.00& 1.16E+05 & 1.64E+06 & 1.27E+04       \\
SSP3(3,3,2)         &  -4.48E-02&     1.00 & -6.22E-03 & -3.92E-02  & 6.45E-04     \\
SSP3(4,3,3)         &  -7.14E-02 &   1.00&  -1.95E-02  &-5.16E-02  &-2.11E-04         \\
\bottomrule
\end{tabular}
\caption{Results for  the problem in \S \ref{sec:linear_pde_fd_f_g_choice} with the final time $T = 1.0$ and an unstable choice of $f$ and $g$ using the Midpoint(1,2,2), IMEX-SSP3(3,3,2) and IMEX-SSP3(4,3,3)schemes.}
\label{tab:9}
\end{table}

\subsubsection{1D Magnetohydrodynamic Problem}
\label{sec:allfven_wave}
For our final result, we consider a one-dimensional simplification of the three-dimensional resistive magnetohydrodynamic equations \cite{GoedbloedPoedts2004,ShadidEtAlMHD2015}.  
For clarity of notation that follows we consider a right-handed coordinate system with axis definitions $(\chi,\zeta,\varsigma)$ that corresponds to the typical $(x,y,z)$ Cartesian system.	
This problem is an analytic asymptotic model for the propagation of an Alf\'en wave in a viscous conducting fluid that fills the half space above the $\chi$-axis.  The solution is of the form $\mbfB = (B(\zeta,t),B_0,0)$ and $\mbfv = (v(\zeta,t),0,0)$.  The resistive MHD equations reduce  to the transient term, the Lorentz force term, and the viscous stress term in the $\chi$-momentum equation,
\begin{equation}
\label{eq:alfven_v}
 \frac{\partial v}{\partial t} = \frac{B_0}{\rho}  \frac{\partial B}{\partial \zeta} + \frac{\mu}{\rho} \frac{\partial^2 v}{\partial \zeta^2},
\end{equation}
and the transient term, the induction transport term, and the magnetic diffusion term for the $\chi$-magnetic induction equation,
\begin{equation}
\label{eq:alfven_B}
\frac{\partial B}{\partial t} = B_0  \frac{\partial v}{\partial \zeta} +  \frac{\eta}{\mu_0} \frac{\partial^2 B}{\partial \zeta^2}.
\end{equation}
These equations are often used as a test for more complicated three-dimensional MHD codes, as the 1D counterpart has an analytic solution,
\begin{align}
	v(\zeta,t) &= \frac{U}{4}\left[e^{\frac{-A_0 \zeta}{d}} \left(1-\erf \left(\frac{\zeta-A_0 t}{2\sqrt{d t}}\right)\right)-\erf \left(\frac{\zeta-A_0 t}{2\sqrt{d t}} \right)\right] + \\
&\qquad \frac{1}{4} U\left[e^{\frac{A_0 \zeta}{d}} \left(1-\erf\left(\frac{\zeta+A_0 t}{2\sqrt{dt}}\right)\right)-\erf \left(\frac{\zeta + A_0 t}{2\sqrt{d t}}\right) +2\right],	\notag\\
	B(\zeta,t) &= -\frac{1}{4} e^{\frac{-A_0 \zeta}{d}} \left(-1 + e^{\frac{A_0 \zeta}{d}}\right) U \sqrt{\mu \rho} \left[\erfc \left(\frac{\zeta-A_0 t}{2\sqrt{d t}}\right) + e^{\frac{A_0 \zeta}{d}} \erfc \left(\frac{\zeta + A_0 t}{2\sqrt{d t}}\right)\right],
\end{align}
where $d = \eta/\mu_0$. 

We consider this problem with parameters $B_0 = 10$ and all other parameters are set to 1. The initial conditions are chosen to be $v = B = 0$. Plots of the true solution and IMEX solution for the velocity variable, $v$, at different times for two IMEX different schemes are shown in Figure~\ref{fig:alfven_solns}. 
For the IMEX solutions, the boundary conditions are obtained from the exact solution.   We discretize the spatial domain with discretization parameter $h = 5 \by 10^{-3}$ to obtain a system of the form \eqref{eq:model_ode}, which we solve to final time $T = 0.1$ with time step $k_n = 1 \by 10^{-3}$. The second-order operators were integrated implicitly and the first-order operators (corresponding to the Alf\'en wave) were treated explicitly, see \S\ref{sec:alfven_choice_f_g_a}.  The figure indicates that the solution for the Midpoint(1,2,2) scheme in Figure~\ref{fig:alfven_mid} is unstable, whereas the solution for the IMEX-SSP3(3,3,2) in Figure~\ref{fig:alfven_332} scheme is quite accurate. The plot for the IMEX-SSP3(4,3,3) is similar to Figure~\ref{fig:alfven_332}. The accuracy of the solutions indicated by the figures is also quantitatively identified by the error estimates in \S\ref{sec:alfven_choice_f_g_a}.

\begin{figure}[!ht]
\centering
\begin{subfigure}[!ht]{0.46\textwidth}
\centering
\includegraphics[width = \textwidth]{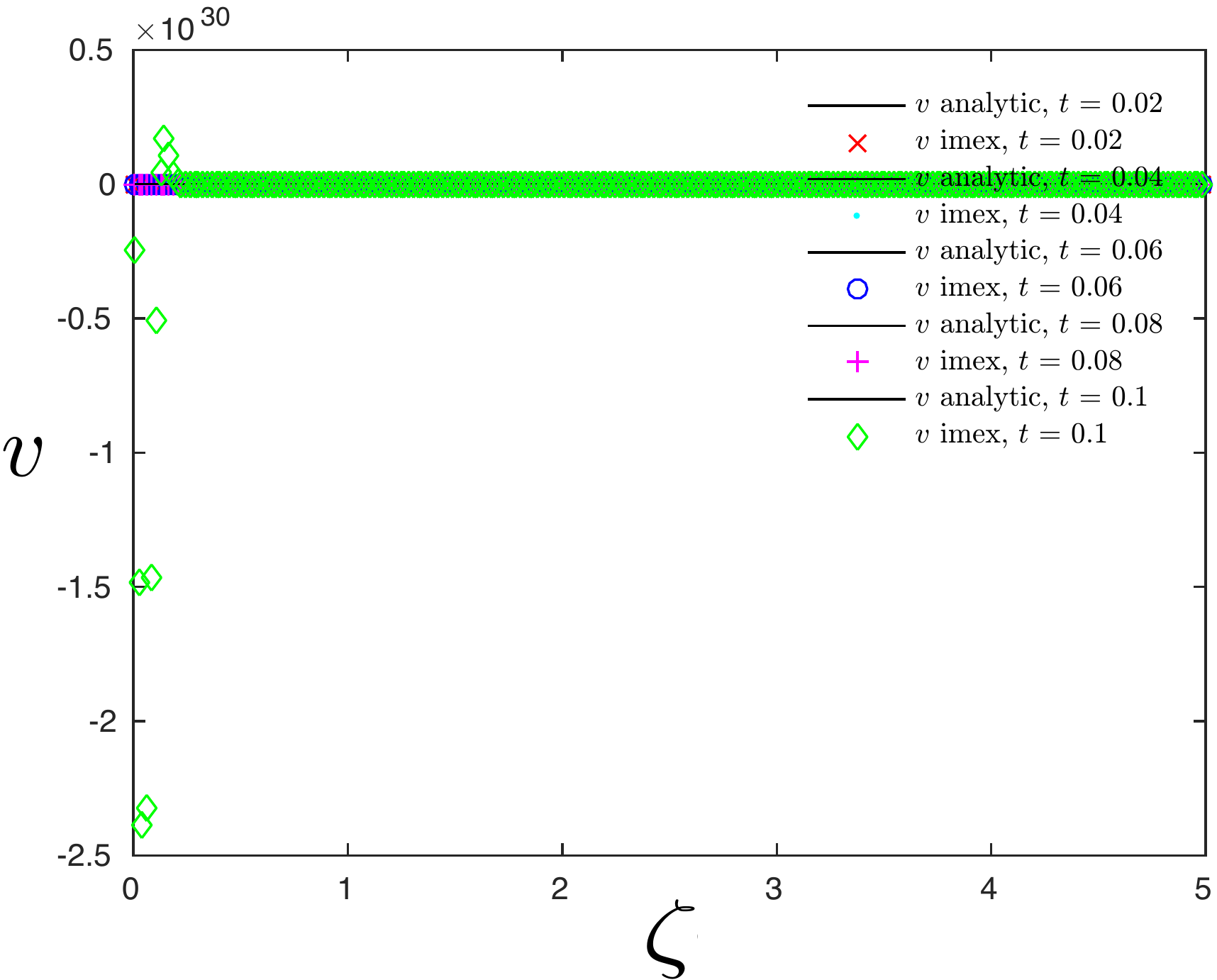} 
\caption{}
\label{fig:alfven_mid}
\end{subfigure}
\hfill
\begin{subfigure}[!ht]{0.46\textwidth}
\includegraphics[width = \textwidth]{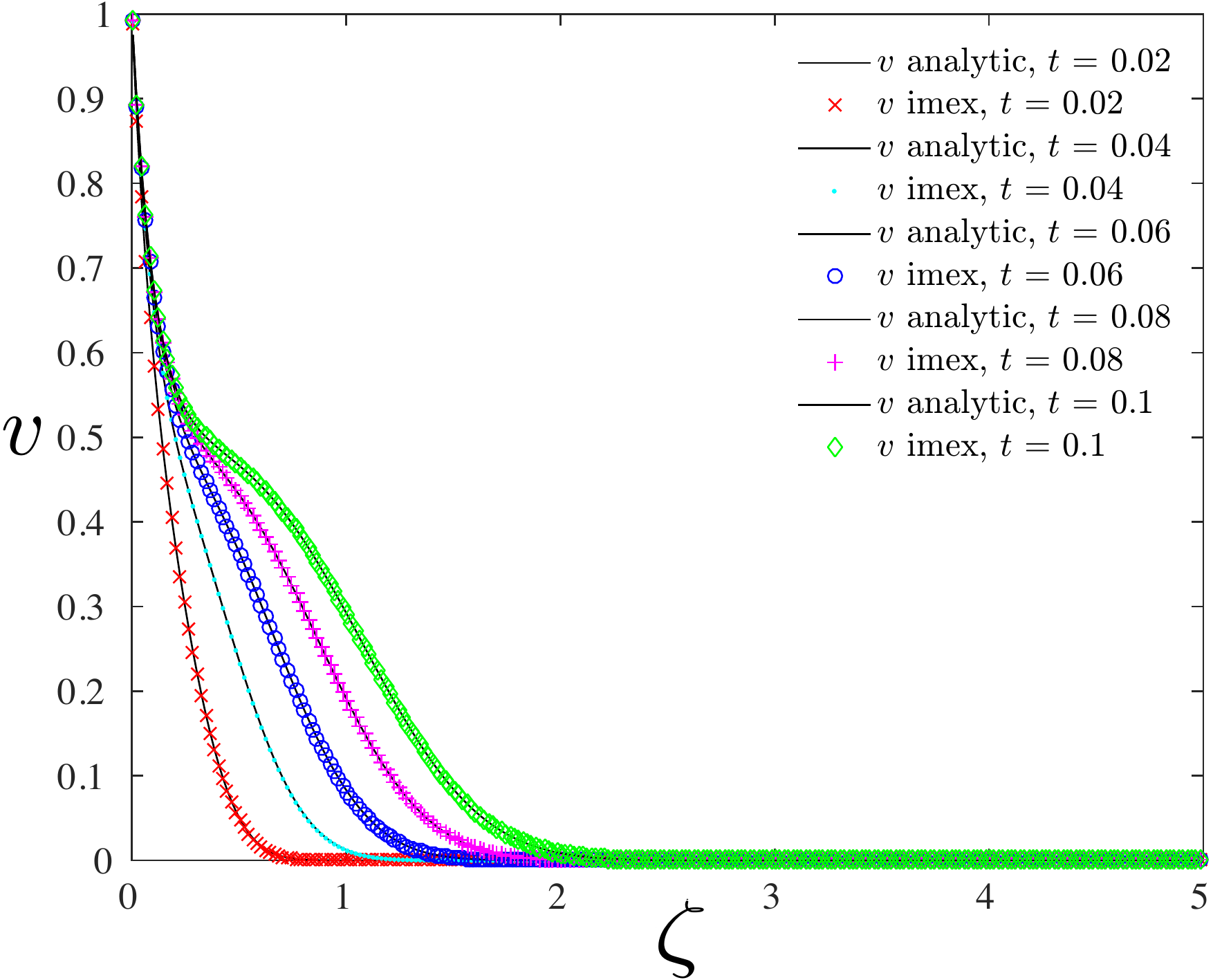} 
\caption{}
\label{fig:alfven_332}
\end{subfigure}
\caption{Plot of $v$ for the problem in \eqref{eq:alfven_v} and \eqref{eq:alfven_B} at different times. The analytic solution, labeled $v$ analytic, is a solid black line. The IMEX solution is label $v$ imex. (a) Solution obtained using the Midpoint(1,2,2) scheme. (b) Solution obtained using the IMEX-SSP3(3,3,2) scheme. The solution for the midpoint scheme exhibits instabilities whereas the solution for the IMEX-SSP3(3,3,2) scheme is quite accurate. The plot for the IMEX-SSP3(4,3,3) is similar to the one for the  IMEX-SSP3(3,3,2) scheme.}
\label{fig:alfven_solns}
\end{figure}

We choose a quantity-of-interest $\int_0^L v \,d\zeta$ where $\zeta\in [0,L]$ belongs to the spatial domain considered for the problem. We further decompose the solution as  $y = [y_v, y_B]^\top$ with the components  $y_v$ and $y_B$ corresponding to the (spatially discretized) variables $v$ and $B$ respectively.  Similarly, we decompose $f = [f_v, f_B]^\top$, $g = [g_v, g_B]^\top$ and the adjoint solution $\phi = [\phi_v, \phi_B]$.  For all runs, in the context of the induction equation $f_B$ corresponds to the spatial discretization of the term $B_0  \frac{\partial v}{\partial \zeta}$  and $g_B$ corresponds to the spatial discretization of the term $ \frac{\eta}{\mu_0} \frac{\partial^2 B}{\partial \zeta^2}$, with $f_v$ and $g_v$ defined below for each example.
	
	Finally, we decompose the error components from Theorem \ref{thm:err_rep} as,
	\begin{align}
		E1 = E1_v + E1_B, \quad E2 = E2_v + E2_B, \quad E3 =  E3_v + E3_B,	
	\end{align}
	where,
	\begin{align}
			E1_v &= \sum_{n=0}^{N-1}  \langle - \dot{Y_v}, \phi_v  - \pi_n \phi_v \rangle_{I_n} + \langle f_v(\mathcal{I} Y) , \phi_v - \pi_n \phi_v \rangle_{{I_n},Q^f} + \\
			&\qquad \langle g_v(\mathcal{I} Y)  , \phi_v- \pi_n \phi_v \rangle_{{I_n},Q^g},  \notag\\
			E2_v &= \sum_{n=0}^{N-1} \langle f_v(Y) , \phi_v \rangle_{I_n} - \langle f_v(\mathcal{I} Y) , \phi_v \rangle_{{I_n},Q^f}, \\
			E3_v &= \sum_{n=0}^{N-1} \langle g_v(Y), \phi_v \rangle_{I_n} -   \langle g_v(\mathcal{I} Y)  , \phi_v \rangle_{{I_n},Q^g},
	\end{align}
where $Y = [Y_v, Y_B]^\top$ is the IMEX solution. The definitions for the error contributions corresponding to the $B$ variables follow in a similar manner.

\paragraph{Implicit and Explicit components for the $v$ equation}
\label{sec:alfven_choice_f_g_a}
	For our first result we split the $v$ components of the right hand side into both implicit and explicit parts.  We choose $f_v$ to correspond with $\frac{B_0}{\rho}  \frac{\partial B}{\partial \zeta} $ and $g_v$ to correspond with $\frac{\mu}{\rho} \frac{\partial^2 v}{\partial \zeta^2}$.  The results are shown in Tables~\ref{tab:10a} and \ref{tab:10b}.  We observe that the error estimate has effectivity ratio close to one, even when the actual error is quite large, as is the case for the Midpoint(1,2,2) scheme. The unstable solution for this scheme is depicted in Figure \ref{fig:alfven_mid}. 

\begin{table}[!ht]
\centering
\begin{tabular}{ l| c |c |c |c | c }
\toprule
Scheme & Comp. Err. &Eff. Ratio & $E1$ & $E2$ & $E3$\\
\midrule
Mid(1,2,2)            &1.73e+27 & 1.00 & 8.01 E+26 & -2.10E+26& 1.13E+27\\
SSP3(3,3,2)         &  -3.30E-02 & 1.00& -3.033E-05& 1.59E-01& -1.92E-01\\
SSP3(4,3,3)         &  5.19E-04 & 1.00 & 1.785E-04 & 8.71E-05& 2.54E-04\\
\bottomrule
\end{tabular}
\caption{Results for  the problem in \S \ref{sec:allfven_wave} with the choice of $f$ and $g$ given in \ref{sec:alfven_choice_f_g_a} and final time $T = 0.1$ using the Midpoint(1,2,2), IMEX-SSP3(3,3,2) and IMEX-SSP3(4,3,3)schemes.}
\label{tab:10a}
\end{table}

\begin{table}[!ht]
\centering
\begin{tabular}{ l|c| c |c |c |c | c }
\toprule
Scheme &$E1_v$ & $E1_B$ & $E2_v$ & $E2_B$ & $E3_v$ & $E3_B$\\
\midrule
Mid(1,2,2)            &8.07E+26 & -6.04E+24& -1.93E+26& -1.70E+25& 9.51E+26& 1.87E+26\\ 
SSP3(3,3,2)         &  -2.92E-05& -1.06E-06& 2.85E-03& 1.56E-01& -1.89E-01& -3.02E03\\
SSP3(4,3,3)         &  1.73E-04 &4.91E-06 & 4.90E-05& 3.80E-05& 3.76E-04& -1.22E-04\\
\bottomrule
\end{tabular}
\caption{Different components of the error contributions corresponding to the results in Table~\ref{tab:10a}.}
\label{tab:10b}
\end{table}

\paragraph{Implicit components only for the $v$ equation}
\label{sec:alfven_choice_f_g_b}
	For the next result, in the context of the momentum equation we choose all the $v$ components of the right hand side to be handled implicitly, thus setting $f_v = 0$ and letting $g_v$ correspond with $\frac{B_0}{\rho}  \frac{\partial B}{\partial \zeta}  + \frac{\mu}{\rho} \frac{\partial^2 v}{\partial \zeta^2}$.  In this case both the Lorentz force term and the viscous stress  are integrated implicitly. 
Tables~\ref{tab:11a} and \ref{tab:11b} show the results, and now we observe that all three schemes are associated with much
more accurate results. Apparently in this case representing even one component of the first order terms that compose the Alfven wave helps to stabilize the Midpoint(1,2,2) method. Clearly, these results demonstrate the accuracy of the  error estimate.

\begin{table}[!ht]
\centering
\begin{tabular}{ l| c |c |c |c | c }
\toprule
Scheme & Comp. Err. &Eff. Ratio & $E1$ & $E2$ & $E3$\\
\midrule
Mid(1,2,2)            &-4.88E-04 & 1.00 & 3.88E-06 & 3.22E-04 & -8.14E-04\\
SSP3(3,3,2)         & -3.27E-02& 1.00 & -3.57E-07& 1.55E-01& -1.87E-01\\ 
SSP3(4,3,3)         &  6.7628e-04& 1.00 & 1.1075e-06 & 2.0620e-04 & 4.6897e-04\\
\bottomrule
\end{tabular}
\caption{Results for  the problem in \S \ref{sec:allfven_wave} with the choice of $f$ and $g$ given in \ref{sec:alfven_choice_f_g_b} and final time $T = 0.1$ using the Midpoint(1,2,2), IMEX-SSP3(3,3,2) and IMEX-SSP3(4,3,3)schemes. }
\label{tab:11a}
\end{table}

\begin{table}[!ht]
\centering
\begin{tabular}{ l|c| c |c |c |c | c }
\toprule
Scheme &$E1_v$ & $E1_B$ & $E2_v$ & $E2_B$ & $E3_v$ & $E3_B$\\
\midrule
Mid(1,2,2)            &1.38E-05 &-9.97E-06 & 0 & 3.22E-04 &  -3.10E-03  & 2.28E-03\\
SSP3(3,3,2)         &  1.03E-06 & -1.39E-06 & 0 &  1.55E-01 & -1.85E-01 & -2.87E-03\\
SSP3(4,3,3)         & -3.89E-06 &  5.00E-06  & 0 &   2.06E-04  & 4.51E-04 &  1.73E-05\\
\bottomrule
\end{tabular}
\caption{Different components of the error contributions corresponding to the results in Table~\ref{tab:11a}. }
\label{tab:11b}
\end{table}

\section{Conclusions}
	We present adjoint-based \emph{a posteriori} error estimation for multi-stage Runge-Kutta IMEX schemes.  These estimates are achieved by representing the IMEX scheme as a finite element method, which uses particular quadratures to obtain nodal equivalence with the IMEX scheme.  This provides us with an approximation that equals the IMEX approximation at the nodes, but is defined for the entire temporal domain.  We then use this approximation to estimate the error in the IMEX approximation.  In addition, our analysis distinguishes between error due to the discretization of the temporal domain, the explicit portion of the scheme and the implicit portion of the scheme.  This splitting of the error into different contributions allows us to determine what portion of the method is most responsible for inaccuracy, and can inform the user as to the best course method to reduce the error.

\section*{Acknowledgments}  
J. Chaudhry's work is supported in part by the Department of Energy (DE-SC0009324) and by Sandia National Laboratories: Laboratory  Directed Research and Development (LDRD) Funding under Academic Alliance Program FY2016. 
J. N. Shadid's work was partially supported by the DOE Office of Science Applied Mathematics Program at Sandia National Laboratories under contract DE-AC04-94AL85000. The authors will also like to thank Prof. Don Estep from Colorado State University for discussing the ideas presented here.

\section*{References}
\bibliographystyle{elsarticle-num}
\bibliography{imex_rk}

\end{document}